\documentclass[ss]{imsart}

\RequirePackage[OT1]{fontenc}
\RequirePackage{amsthm,amsmath}
\RequirePackage[authoryear]{natbib}
\RequirePackage[colorlinks,citecolor=blue,urlcolor=blue]{hyperref}
\RequirePackage{amssymb}
\usepackage{graphicx}
\usepackage{algorithm2e}
\usepackage{array}
\usepackage{multirow}
\usepackage{nccmath}


\startlocaldefs
\numberwithin{equation}{section}
\theoremstyle{plain}
\newtheorem{theorem}{Theorem}[section]
\newtheorem{definition}{Definition}[section]

\newtheorem{lemma}{Lemma}[section]
\endlocaldefs

\renewcommand{\P}{\mathbb{P}}
\newcommand{\E}{\mathbb{E}}


\newcommand{\N}{\mathbb{N}} 

\newcommand{\R}{\mathbb{R}}

\renewcommand{\L}{\mathcal{L}}

\newcommand{\Us}{\mathcal{B}}
\newcommand{\zz}{z}


\newcommand{\rkl}[1]{\left( #1 \right)}
\newcommand{\skal}[2]{\left\langle #1, #2 \right \rangle}
\newcommand{\norm}[1]{\left\| #1 \right \|}
\newcommand{\abs}[1]{\left|#1 \right|}

\newcommand{\argmin}[1]{\underset{#1}{\operatorname{arg}\,\operatorname{min}}\;}
\newcommand{\interior}{\text{int}}
\def\eqdef{\stackrel{\operatorname{def}}{=}}

\begin{document}

\begin{frontmatter}
\title{Multivariate Brenier cumulative distribution functions and their application to non-parametric testing}
\runtitle{Multivariate Brenier CDFs and non-parametric testing}

\begin{aug}
\author{\fnms{Melf} \snm{Boeckel}
\ead[label=e1]{melf@mathematik.hu-berlin.de}
}

\address{Humboldt University Berlin\\
\printead{e1}}

\author{\fnms{Vladimir} \snm{Spokoiny}
\ead[label=e2]{spokoiny@wias-berlin.de}
}

\address{Weierstrass Institute for Applied Analysis and Stochastics\\
\printead{e2}}

\author{\fnms{Alexandra} \snm{Suvorikova}
\ead[label=e3]{suvorikova@math.uni-potsdam.de}}

\address{University of Potsdam\\
\printead{e3}}

\runauthor{M. Boeckel et al.}


\end{aug}

\begin{abstract}
In this work we introduce a novel approach of construction 
of multivariate cumulative distribution functions, 
based on cyclical-monotone mapping of an original measure \(\mu \in \mathcal{P}^{ac}_2(\R^d)\) 
to some target measure \(\nu \in \mathcal{P}^{ac}_2(\R^d) \), supported on a convex compact subset of \(\R^d\). This map is
referred to as \(\nu\)-Brenier distribution function (\(\nu\)-BDF), whose counterpart under the one-dimensional setting \(d = 1\) is an ordinary CDF, with \(\nu\) selected as \(\mathcal{U}[0, 1]\), a uniform distribution on \([0, 1]\). Following one-dimensional frame-work, a multivariate analogue of Glivenko-Cantelli theorem is provided.
A practical applicability of the theory is then 
illustrated by the development of a non-parametric pivotal two-sample test, that is rested on \(2\)-Wasserstein distance.
\end{abstract}


\tableofcontents
\end{frontmatter}

\section{Introduction}
The origin of non-parametric estimation dates back 
to the beginning of the $20^{th}$ century with the works by Kolmogorov, Smirnov, 
Cram\'er and von Mises.  Ever since, the elementary ideas 
already have been adopted into the canon of contemporary statistics 
(see \citet{DeGroot2011}, \citet{Georgii2013}, \citet{Rueschendorf2014}).
The classical extension to multivariate distributions 
starts in the second half of the $20^{th}$ century 
with the development of uniform error bounds for the empirical processes related to 
\begin{equation}
 \label{goalClause}
   \left| \frac{1}{n}\sum_{i=1}^n\mathbb{I}(X_i \in A)-\P(X\in A)\right|. 
\end{equation}
The question on the asymptotic behaviour of these quantities 
is generally treated in Vapnik-Chervonenkis theory 
(see e.g. \citet{Dudley2014},\citet{pollard1990empirical},
\citet{Vapnik2013},\citet{vapnik2015uniform}).  
The main idea derives from an abstract yet fundamental relation of 
combinatorial set relations (similar to the inclusion/exclusion principle) 
to the expansion of exponential bounds on the approximation error 
in the central limit theorem.  
This can be traced back to \citet{steele1978empirical} 
and has since then influenced the development of 
modern statistical learning theory and support vector machines.
However, the uniform convergence laws usually require very technical assumptions 
on the families of the sets $A$ (so called VC-classes) to which bounds on \eqref{goalClause} apply.
{
This paper focuses on establishing uniform convergence
of~\eqref{goalClause} which does not depend on the distribution 
properties of the underlying multidimensional 
random variable \(X \sim \mu\), \(\text{supp}(\mu)\subseteq \R^d\).
} The classical cumulative distribution function 
relates the empirical process defined by \eqref{goalClause} 
to a representation of probability measures 
in terms of functionals evaluating in $N = [0,1]$.  
Instead, given some family \(\mathcal{S}\)
of probability measures \(\mu\)
supported on \(\R^d\), 
we present a \textit{multivariate} version 
of a cumulative distribution function by
mapping them to some preliminary chosen 
compact convex set \(N \subseteq \R^d\).
Unlike one-dimensional case, where the canonical choice
\(N = [0, 1]\) is fixed, in \(\R^d\) we allow \(N\) to be selected 
flexibly among all compact convex sets.
The key role in the construction of a multivariate CDF 
plays the choice of a 
map \(F\) which maps \(\text{supp}(\mu) = M\) to \(N\).
This map in some sense should reflect geometrical properties of the original measure \(\mu\).
Namely, we require \(F\) to be cyclical monotone.
\begin{definition}[Cyclical monotonicity]\label{cyclMonoDef}\
A map $F:M\to N$ with $M,N \subset \R^d$ is said to be cyclical monotone, 
if it satisfies for all finite collections of points $x_1,...,x_n\in M$ 
and all permutations $\pi\in\Pi_n$ the relationship
\begin{equation}
\label{eq:cyclMonoDef}
\sum_{i=1}^n \langle F(x_i), x_i\rangle \geq \sum_{i=1}^n \langle F(x_{\pi(i)}),x_i\rangle. 
\end{equation}
\end{definition}
A possible way to construct such an \(F\) is deeply rooted in the ideas underlying the celebrated
Brenier's polar factorisation theorem~\citet{brenier1991polar}; 
introducing a continuous measure \(\nu\),
s.t. \(\text{supp}(\nu) = N\), 
and applying Brenier's theorem,
one immediately obtains a cyclically-monotone
measure-preserving transformation
\(\tilde{F}: \R^d \rightarrow \R^d\),
which pushes forward \(\mu\) to \(\nu\):
\(\tilde{F}\#\mu = \nu\). 
Fig.~\ref{fig:ordering} illustrates the underlying concept of geometrical similarity, which follows from cyclical monotonisity of \(\tilde{F}\).
Here the left data-cloud \(X_1,..., X_n\) corresponds to some i.i.d. sample from a two-component
mixture \(\mu = t\mu_{X} + (1-t)\mu_{Y}\), \(t \in [0, 1]\):
points related to \(\mu_{X}\) are green, while blue ones come from \(\mu_{Y}\).
In this example \(\nu\) is chosen as 
a uniform distribution on a 2-dimensional ball of radii 1, \(\nu = \mathcal{U}[B_2(1)]\).
Transportation of \(\mu\) to \(\nu\) by \(\tilde{F}\), \(F\#\mu = \nu\) induces the presented in the right box relative ordering of images \(\tilde{F}(X_i)\) in the support of \(\nu\).
\begin{figure}[!h]
	\label{fig:ordering}
	\begin{center}
		\includegraphics[width = .9\linewidth]{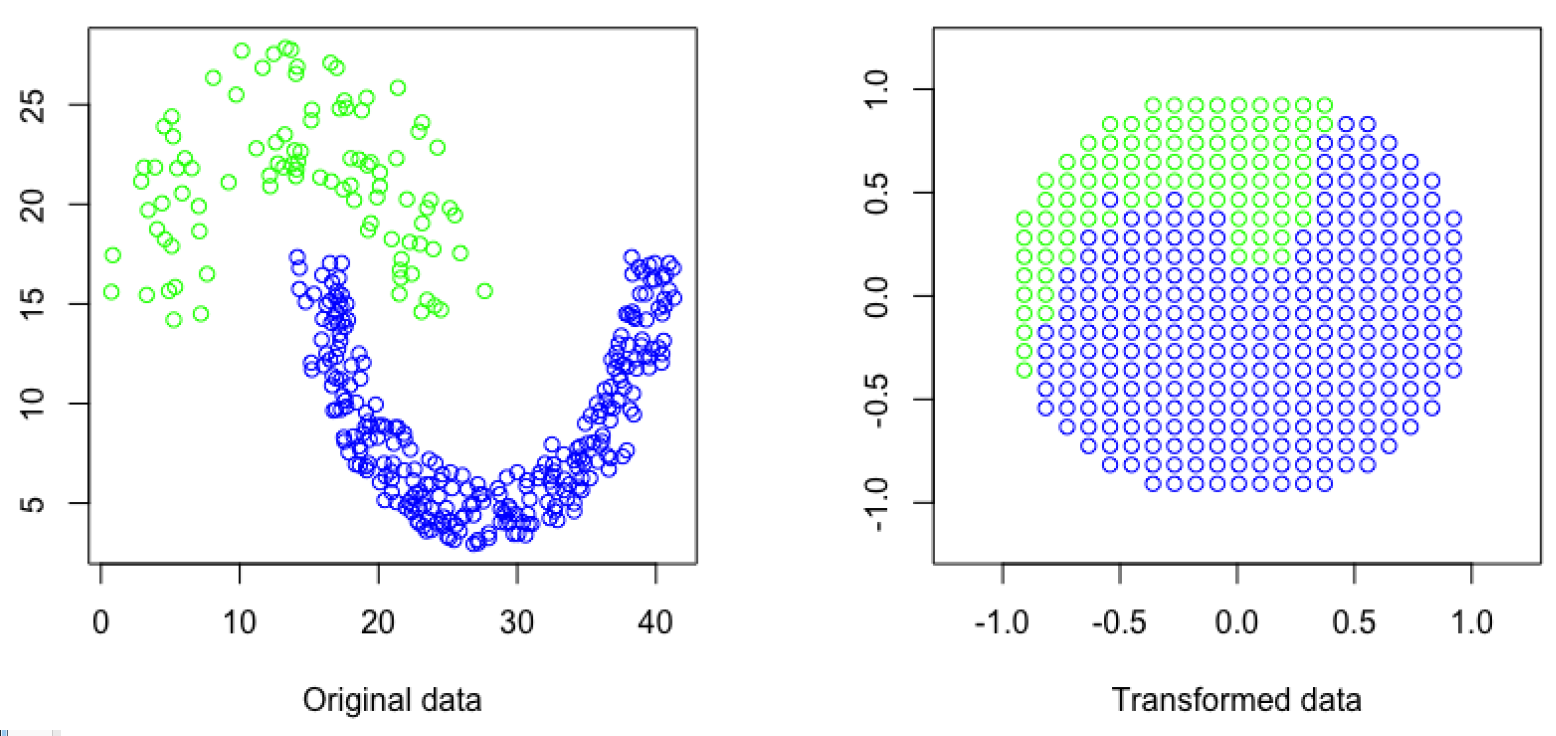}
	\end{center}	
\end{figure}
Cyclical monotone measure-preserving \(\tilde{F}\)
plays a key role in construction of multivariate CDF \(F\).
A possible choice of \(\nu\) is also flexible and restricted to
a set of all continuous measures, supported on \(N\). For example, in case of the classic 
one-dimensional CDF \(\nu = \mathcal{U}[0, 1]\).

The proposed concept of a multivariate distribution
function is referred to as \(\nu\)-\textit{Brenier distribution function}.
Further we develop the theory for a class
of measures belonging to family of absolutely continuous measures with 
finite second moment $\mathcal{P}^{ac}_{2}(\R^d)$, with
\begin{align*}
\mathcal{P}^{ac}_{2}\bigl(\R^d\bigr)\eqdef
\big\lbrace\,
\mu\in\mathcal{P}\bigl(\R^d\bigr)\,&\big|\, \E_\mu{\left\|{X}\right\|^2}<\infty, \\
& \forall B\in\mathcal{B}: \lambda(B)=0\Rightarrow \mu(B)=0
\big\rbrace,
\end{align*}
where \(\lambda\) is the Lebesgue measure 
and \(\mathcal{B}\) is the Borel \(\sigma\)-algebra on \(\R^d\).
It is worth noting, that the Brenier distribution function is
closely related to the concept of optimal transport under quadratic cost. 
The problem of optimal transportation dates back to Monge, ~\citet{monge1781}, end of $18^{th}$ century. It has been popularized by Kantorovich in the middle of the $20^{th}$ century.  
We generally refer to \citet{Ambrosio2008}, \citet{Rachev2006}, and \citet{Villani2003} 
for an introduction to the topic.  

The idea for this study has been inspired by \citet{chernozhukov2017monge}.  
Most notably, the authors therein develop a versatile depth function on $\R^d$ 
respecting the distribution of absolutely continuous and compactly supported measures. 
In practice however, the construction of a Brenier distribution function is analytically difficult
and one has to replace it with its empirical counterpart.
The main result of this part is Theorem~\ref{mGK}, 
which is a multivariate analogon to the Glivenko-Cantelli theorem,
claims that such a replacement is valid.
Details are provided in Section~\ref{section:BDF}.

We further illustrate how the concept of Brenier distribution function
can be used to develop non-parametric test procedures 
generalizing the concept of order statistics 
on the real line to higher dimensions.
The problem is stated as follows.
Let \( (X_1,.., X_n)\) and \((Y_1,..., Y_m) \) be two samples in hand,
s.t.
\(X_i \overset{\text{iid}}{\backsim} \mu_{X} \),
\(Y_j \overset{\text{iid}}{\backsim} \mu_{Y} \). 
The goal is to check whether the samples are generated by the same measure or not:
\begin{equation}
\label{def:two_sample}
H_0: \mu_{X} = \mu_{Y},
\quad
H_1: \mu_{X} \neq \mu_{Y}.
\end{equation}
Without knowledge of neither \(\mu_{X} \) nor \(\mu_{Y} \) their empirical counterparts
\(\mu^n_X\) and \(\mu^m_Y \) have to be used for testing the null.
In a non-parametric setting it is natural to test 
for significance of some distance \(\textit{dist}\) between the
empirical measures. The main question concerns
constructing a rejection region \( z^{nm}_{\alpha} \), s.t.
\[
z^{nm}_{\alpha} \eqdef arg\min_{z > 0} \Bigl\{
\P\bigl(\text{dist}\bigl(\mu^n_{X}, \mu^m_{Y} \bigr) > z \big| H_0 \bigr) = \alpha \Bigr\}.
\]
Estimating \(z^{nm}_{\alpha} \) without  preliminary assumptions on \(\mu_{X},\mu_{Y} \) 
would be highly unreasonable.
A natural solution is the introduction of a
\textit{pivotal} transformation of the data, 
such that the distance under consideration does not depend on
the distribution of the originally observed \((X_1,...,Y_m) \).
The pioneering work introducing this kind of transformation
is~\citet{wilcoxon1945individual}.
The author uses a rank-based transformation of a data set,
which  appears to be pivotal.
The idea is developed further by \citet{mann1947test}. 
However, the lack of a total ordering in \(\R^d \), \(d > 1\) 
complicates the immediate extension of testing ranks.
Nevertheless, there exist multiple proposals to circumvent this deficit. 
Extensive surveys can be found in~\citet{jurevckova2012nonparametric}, 
\citet{oja2010multivariate}. 
We shortly mention several tests exploiting
different concepts of ordering in higher dimensions. 
The paper~\citet{randles1989distribution} proposes a {sign}-test in \(\R^d \).
It introduces the angular distance between two observations, which is referred to as
{interdirection}. The idea is further developed in~\citet{hallin2002optimal}.
In \citet{hallin2006semiparametrically2, hallin2006semiparametrically1}
the authors construct an optimal test for spherical symmetry of measures. 
It is based on {spatial ranks}, presented in \citet{mottonen1995multivariate}.

In the current study we construct a test statistics,
based on \(2\)-Wasserstein distance which is defined as follows.
\begin{definition}[Wasserstein distance]\label{defiWasserMetric}\
Let $\mu_X$, $\mu_Y$ be square-integrable probability measures on $\R^d$:
\begin{equation*}
W_2^2(\mu_X,\mu_Y)\eqdef \inf_{\pi\in\Pi}\E_\pi\left\|{X-Y}\right\|^2,
\end{equation*}
where \(\Pi \) is the set of all joint probability measures with marginals
\(\mu_{X}\) and \(\mu_{Y}\):
\[
\Pi = \left\{\pi\in\mathcal{P}(\R^d\times\R^d)\,|\,
 \forall B \in \mathcal{B}:~\pi(B\times \R^d)=\mu_{X}(B), 
 \pi(\R^d \times B)=\mu_{Y}(B)   \right\}.
\]  
\end{definition}
The metric properties of $W_2$ are well explained for example in 
\citet{Ambrosio2008}, Chapter 7.1 and \citet{Villani2003}, Chapter 7.
Over last few years, it is used for non-parametric testing, see e.g.~\citet{ramdas2017wasserstein}.
The test presented in the current study is based on
\(2\)-Wasserstein distance between image measures generated by the empirical counterpart \(F_{nm}\) 
of the Brenier distribution function \(F\). 
For the sake of transparency, we
refer to \(F\) as a 
push forward of the mixture  \(\mu \eqdef t\mu_{X} + (1-t)\mu_{Y}\)
to a uniform distribution in the unit ball \(\nu = \mathcal{U}[B_{d}(1)]\): \(F\# \mu = \nu\), with \(t \in [0, 1]\) 
denotes the asymptotic ratio of sample sizes: \(\frac{n}{n +m} \rightarrow t\).
Note, that a choice of the reference measure \( \nu\) 
is not unique under the presented testing framework.
The test statistic is written as
\begin{equation}
\label{def:test}
D_{nm} \eqdef W_2(F_{nm}\#\mu^n_{X}, F_{nm}\#\mu^m_{Y}),
\end{equation}
Section~\ref{section:test} 
explains its pivotal property and provides an asymptotic upper
bound \(\beta_{nm}\) for the II type error:
\[
\P \left(D_{nm} \leq z^{nm}_{\alpha} \big| W_2(\mu_{X}, 
\mu_{Y}) = \Delta >0 \right)\leq \beta_{nm}.
\]
This result is presented in Theorem~\ref{theorem:beta}. 
Section~\ref{section:algo} contains an algorithm description and experiments.
All proofs are collected in the Appendix.
The performance of two-sample testing procedure is illustrated using 
the data about chemical characteristics of red and white variants of the Portuguese "Vinho Verde" wine~\citet{cortez2009modeling}. 
Each entry is a \(12\)-dimensional numerical vector,
that includes the results of objective tests 
(e.g. PH values, alcohol content e.t.c.) and the output,
that is based on sensory data
(median of at least 3 evaluations made by wine experts).
We are interested in the detection of 
statistically significant differences between samples,
that were assigned different notes by the experts.
The data set is available following the link:
\url{https://archive.ics.uci.edu/ml/datasets/wine+quality}.

\section{Brenier distribution function}
\label{section:BDF}
In general, the Brenier distribution function 
for a measure $\mu \in \mathcal{P}^{ac}_{2}(\mathbb{R}^d)$ 
can be defined with respect to any measure \(\nu \in \mathcal{P}^{ac}_{2}(\mathbb{R}^d)\), 
supported on a \textit{convex compact} set \(N \subseteq \R^d\). 
We refer to such Brenier distribution functions as
\(\nu\)-BDF. However, to establish parallels with
the univariate case, our canonical choice of \(\nu\)
is the uniform distribution on the unit ball, centred at zero $\nu=\mathcal{U}[B_d(1)]$. 
The BDF can be constructed by an arbitrary measure preserving map
$T$, $T\#\mu=\nu$. This generally exists, because the corresponding 
measure spaces are isomorphic \citet{Ito1984}.
The celebrated Brenier's polar factorisation theorem gives 
a cyclically monotone representation of $T$ in terms of $\nu$.

\begin{theorem}[Brenier's polar factorisation, \citet{brenier1991polar}]
\label{brenierPolFacThm}
Let \mbox{\(\mu \in \mathcal{P}^{ac}_{2}(\R^d)\)}
and \(T\) be a measure preserving transformation, 
such that \(T\# \mu = \nu\).
Then there exists a factorisation $T = \tilde{F}\circ \pi$ with $\tilde{F}$ 
a cyclically monotone map and $\pi$ a measure preserving map.
\end{theorem}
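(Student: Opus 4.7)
The plan is to realise $\tilde F$ as the cyclically monotone Brenier map pushing $\mu$ to $\nu$, and then to build $\pi$ by pre-composing $T$ with the Legendre-dual map. Because $\mu\in\mathcal{P}^{ac}_{2}(\R^d)$, the quadratic Monge--Kantorovich problem with marginals $\mu$ and $\nu$ admits a unique optimal coupling concentrated on the graph of a map $\tilde F \eqdef \nabla\phi$ for some lower semicontinuous convex potential $\phi:\R^d\to\R\cup\{+\infty\}$. Rockafellar's classical result identifies the graph of $\partial\phi$ as cyclically monotone in the sense of Definition~\ref{cyclMonoDef}; since $\phi$ is differentiable $\mu$-a.e., $\tilde F$ is well defined $\mu$-a.e., satisfies \eqref{eq:cyclMonoDef}, and by construction $\tilde F\#\mu=\nu$.

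Next, let $\phi^{*}$ denote the Legendre conjugate of $\phi$. Assuming $\nu$ is absolutely continuous on its convex support, $\partial\phi^{*}$ is single-valued $\nu$-a.e., producing a measurable map $\nabla\phi^{*}$ with $\nabla\phi^{*}\#\nu=\mu$ together with the involutive identities $\nabla\phi^{*}\circ\nabla\phi=\mathrm{id}$ $\mu$-a.e.\ and $\nabla\phi\circ\nabla\phi^{*}=\mathrm{id}$ $\nu$-a.e. Define $\pi\eqdef\nabla\phi^{*}\circ T$. Then for any Borel $B\subseteq\R^d$,
\begin{equation*}
\pi\#\mu(B)
=(T\#\mu)\bigl((\nabla\phi^{*})^{-1}(B)\bigr)
=\nu\bigl((\nabla\phi^{*})^{-1}(B)\bigr)
=(\nabla\phi^{*}\#\nu)(B)
=\mu(B),
\end{equation*}
so $\pi$ is $\mu$-preserving. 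Moreover $\tilde F\circ\pi=\nabla\phi\circ\nabla\phi^{*}\circ T=T$ holds $\mu$-a.e., since the identity $\nabla\phi\circ\nabla\phi^{*}=\mathrm{id}$ is valid $\nu$-a.e.\ and $T\#\mu=\nu$.

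The main obstacle is regularity rather than algebra: the whole construction hinges on $\nabla\phi\circ\nabla\phi^{*}=\mathrm{id}$ holding $\nu$-a.e., which forces $\nu$ (and not only $\mu$) to be absolutely continuous, so that $\phi^{*}$ is differentiable outside a $\nu$-null set. Without that hypothesis one must work with set-valued subdifferentials, replacing the pointwise equalities by $\mu$-a.e.\ inclusions and invoking a measurable selection argument. Accordingly, the delicate step is not Brenier's quadratic existence theorem itself but the verification that $\nabla\phi^{*}$ is simultaneously a well-defined measurable map and a right-inverse of $\nabla\phi$ off sets of measure zero; once that is in hand, $\mu$-preservation of $\pi$ and the identity $\tilde F\circ\pi=T$ follow by the direct push-forward computation above.
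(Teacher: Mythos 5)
The paper does not prove this theorem: it is quoted directly from Brenier (1991) and used as a black box, so there is no in-paper argument to compare against. Taken on its own merits, your sketch is correct and is essentially the standard modern derivation (cf.\ Villani, \emph{Topics in Optimal Transportation}, Thm.~3.8 and its proof): take $\tilde F=\nabla\phi$ to be the Brenier map between $\mu$ and $\nu$, invoke Rockafellar's characterisation of $\partial\phi$ for cyclical monotonicity, and factor through the Legendre dual by setting $\pi=\nabla\phi^{*}\circ T$. Your push-forward computation for $\pi\#\mu=\mu$ and the $\mu$-a.e.\ identity $\nabla\phi\circ\nabla\phi^{*}\circ T=T$ are both valid under the hypotheses you flag. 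You are also right to insist on absolute continuity of $\nu=T\#\mu$: without it $\phi^{*}$ can fail to be single-valued differentiable on a $\nu$-positive set, and the argument must pass through measurable selections from the set-valued subdifferential. The paper's statement elides this hypothesis, but it is consistent with the ambient framework, where $\nu$ is always chosen in $\mathcal{P}^{ac}_{2}(\R^d)$ with compact convex support. The only further implicit assumption is $T\in L^{2}(\mu)$, needed so that $\nu$ has finite second moment and the quadratic Monge--Kantorovich problem is well posed; this too is implicit in the paper's setting.
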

It is a well-known fact (see \citet{brenier1991polar}, Theorem 1.2), 
that under the conditions of Theorem~\ref{brenierPolFacThm} 
\(\tilde{F}\) and \(\pi\) are \(\mu-\)a.s. unique, 
thus \(\tilde{F}\) establishes a one to one correspondence between \(\mu\) and \(\nu\). 
For \(d=1\) this reduces to the general one-to-one correspondence 
of continuous random variables to their cumulative distribution functions, 
given that $\nu$ is a uniform measure \(\nu = \mathcal{U}[0,1]\), because in this case cyclical monotonicity of \(\tilde{F}\) 
reduces to usual monotonicity (see \citet{Villani2003}, Chapter 2.2).
However, since \(\tilde{F}\) is a.s. defined only on the support of \(\mu\), 
its domain should be continued from \(\text{supp}(\mu)\)
to the whole space \(\mathbb{R}^d\).
The procedure is technical and presented in Section~\ref{sec:dom_F}.

\begin{definition}[Brenier distribution function (BDF)]
Let \(\nu\), \(\mu\) be measures in \(\mathcal{P}^{ac}_{2}(\R^d) \).
And let \(\nu\) be supported on a convex compact set.
Denote by \(\tilde{F}\) 
a cyclically monotone map pushing 
\(\mu\) forward to \(\nu\), i.e. 
\(\tilde{F}\#\mu = \nu\). 
The \textit{Brenier distribution function} \(F\) 
is a Lebesgue representation of \(\tilde{F}\) constructed using the procedure described in Appendix~\ref{sec:dom_F}.
\end{definition}
{
The existence of a density $d\mu(x)$ is given by the Radon-Nikodym theorem. Its representation in terms of the multivariate $F$ is given by Alexandrov's second differentiability theorem~\citet{Villani2003}, Theorem 14.25.
}

The following discrete version of the polar factorisation theorem extends
the above definition to an empirical version of the BDF.
\begin{theorem}[Discrete polar factorisation]
\label{prop:dpf}
Let \(\mu_n\) and \(\nu_n\) be empirical counterparts of \(\mu\), \(\nu \) respectively. And let \(T_n\) be any measure-preserving map, such that $T_n\#\mu_n=\nu_n$. 
Then there exisits a factorisation $T_n=F_n\circ\pi$, 
where $F_n$ is a cyclically monotone map 
and $\pi$ a measure preserving map.
\end{theorem}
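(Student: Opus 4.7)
The plan is to reduce the claim to a finite-dimensional assignment problem and then exploit the variational characterisation of cyclical monotonicity. Since \(\mu,\nu\in\mathcal{P}^{ac}_2(\R^d)\), the i.i.d.\ samples defining \(\mu_n\) and \(\nu_n\) consist almost surely of pairwise distinct points, so I may write \(\mu_n=\frac{1}{n}\sum_{i=1}^n\delta_{x_i}\) and \(\nu_n=\frac{1}{n}\sum_{j=1}^n\delta_{y_j}\) with distinct \(x_i\)'s and distinct \(y_j\)'s. Any measure-preserving map \(T_n\) with \(T_n\#\mu_n=\nu_n\) must restrict to a bijection between \(\text{supp}(\mu_n)\) and \(\text{supp}(\nu_n)\), because sending two atoms of \(\mu_n\) to the same atom of \(\nu_n\) would create a pushforward atom of mass at least \(2/n\); hence \(T_n\) is identified with a permutation \(\tau\in\Pi_n\) via \(T_n(x_i)=y_{\tau(i)}\).

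Next, I consider the discrete optimal-matching problem
\[
\sigma^* \in \argmax{\sigma\in\Pi_n}\,\sum_{i=1}^n \langle x_i, y_{\sigma(i)}\rangle,
\]
whose maximum is attained because \(\Pi_n\) is finite, and define \(F_n\colon\text{supp}(\mu_n)\to\text{supp}(\nu_n)\) by \(F_n(x_i)\eqdef y_{\sigma^*(i)}\). As a bijection between equal-mass atom sets, \(F_n\) is automatically measure-preserving, \(F_n\#\mu_n=\nu_n\). To verify cyclical monotonicity I argue by contradiction using a local swap: if for some indices \(i_1,\ldots,i_k\) and some \(\pi\in\Pi_k\) one had
\[
\sum_{j=1}^k \langle F_n(x_{i_{\pi(j)}}), x_{i_j}\rangle > \sum_{j=1}^k \langle F_n(x_{i_j}), x_{i_j}\rangle,
\]
then the permutation \(\sigma'\in\Pi_n\) defined by \(\sigma'(i_j)\eqdef\sigma^*(i_{\pi(j)})\) on the chosen indices and \(\sigma'(i)\eqdef\sigma^*(i)\) elsewhere would satisfy \(\sum_{i=1}^n\langle x_i,y_{\sigma'(i)}\rangle>\sum_{i=1}^n\langle x_i,y_{\sigma^*(i)}\rangle\), contradicting optimality of \(\sigma^*\). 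Collections containing repeated points reduce to the distinct-point case after cancelling identical contributions from both sides of the inequality.

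Finally, I set \(\pi\eqdef F_n^{-1}\circ T_n\); since both \(T_n\) and \(F_n\) are bijections between the same two atom sets, \(\pi\) is a well-defined bijection on \(\text{supp}(\mu_n)\), and pushforward composition yields \(\pi\#\mu_n = F_n^{-1}\#(T_n\#\mu_n) = F_n^{-1}\#\nu_n = \mu_n\), so \(\pi\) is measure-preserving; the identity \(F_n\circ\pi=T_n\) is immediate by construction. The main obstacle is the cyclical-monotonicity step: the definition quantifies over \emph{arbitrary} finite point-collections rather than over the full \(n\)-tuple of atoms, so one has to make sure that a strictly improving cyclic swap on any sub-collection can be lifted to a strictly improving permutation in \(\Pi_n\) by keeping the remaining indices fixed—which is precisely what the construction of \(\sigma'\) achieves. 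Once this extension is cleanly formalised, the remainder is bookkeeping about pushforwards and compositions.
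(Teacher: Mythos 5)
The paper does not actually give a proof of Theorem~\ref{prop:dpf}; it treats the discrete polar factorisation as a known analogue of Brenier's theorem, relying implicitly on the classical fact that an optimal assignment for the linear cost $\sum_{i}\langle x_i, y_{\sigma(i)}\rangle$ is cyclically monotone (Rockafellar's characterisation in finite form). Your argument supplies exactly that missing derivation and is correct in substance: identifying $T_n$ with a permutation (valid almost surely, since the sampled atoms are pairwise distinct), taking $F_n$ to be the assignment $\sigma^*$ maximising $\sum_i \langle x_i, y_{\sigma(i)}\rangle$, establishing cyclical monotonicity by the exchange argument producing $\sigma'$, and setting $\pi = F_n^{-1}\circ T_n$, which is well defined and measure preserving because both $F_n$ and $T_n$ are bijections between equal-mass atom sets.

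The one step worth tightening is your closing remark on repeated points. Definition~\ref{cyclMonoDef} quantifies over arbitrary finite collections, which may repeat an atom, and ``cancelling identical contributions'' does not quite describe the reduction. The clean route: first observe that it suffices to check cyclic permutations, since a general permutation decomposes into disjoint cycles and the defect in \eqref{eq:cyclMonoDef} is additive over cycles; then a cycle visiting some point twice splits into two shorter cycles whose cyclic sums add up to the original, precisely because $F_n$ takes the same value at the repeated point. Iterating reduces to cycles of pairwise distinct points, which your $\sigma'$-exchange argument covers. With that detail formalised the proof is complete; this is also the natural argument, so the discrepancy with the paper is simply that the paper omits it rather than taking a different route.
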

By analogy to the BDF, we now introduce its empirical counterpart \(F_n\).
\begin{definition}[Empirical Brenier distribution function (eBDF)]
\label{def:eBDF}
Let \(\nu\), \(\mu\) be measures in \(\mathcal{P}^{ac}_{2}(\R^d) \),
and let \(\nu\) be supported on a convex compact set.
Denote by \(\mu_n\), \(\nu_n\) their empirical counterparts.
Let \(\tilde{F}_n\) be a cyclically monotone map,
such that \(\tilde{F}_n\#\mu_n = \nu_n\). The
\textit{empirical Brenier distribution function} \(F_n\) 
is a Lebesgue representation of \(\tilde{F}_n\) constructed using the procedure described in Appendix~\ref{sec:dom_F}.
\end{definition}

The classical theorem of Glivenko-Cantelli states 
that the empirical distribution function of an i.i.d. 
sample converges almost surely uniformly to the true one.  
This result can be extended to the setting presented above.  
However, the concept of Brenier distribution functions 
is by now only $\lambda$-a.s. well defined (here \(\lambda\) denotes the Lebesgue measure).  
In order to show uniform convergence 
of \(F_n\) to \(F\) it is necessary to have 
an \emph{everywhere} well defined concept of distribution functions. 
In one dimension this is achieved by the introduction of c\'adl\'ag functions (cf. \citet{Billingsley2013}),
that is, using the convention of right continuity.  
This convention becomes meaningless, 
whenever one assumes absolute continuity of the underlying distribution for \(d=1\), 
because then and only then the CDF is itself continuous, i.e. uniquely defined everywhere.  
This is no longer true in $d>1$.  
Even though we assume absolute continuity of $\mu$ and $\nu$ it might happen that the 
BDF $F$ is not continuous.  
A counterexample is presented in Appendix \ref{sec:Counter}.  
Thus we require \(F\) to be continuous 
in order to resolve the almost nowhere ambiguity of the BDF under consideration. 
For further inquiries on conditions ensuring 
continuity of  \(F\)
we refer to ~\citet{figalli2011necessary}.

With all aforementioned in mind, we now present a multivariate version of Glivenko-Cantelli 
for compactly supported measures $\mu \in \mathcal{P}^{ac}_{2}(\mathbb{R}^d)$.

\begin{theorem}[Multivariate Glivenko-Cantelli]
\label{mGK}\
Let \(\mu \in \mathcal{P}^{ac}_{2}(\mathbb{R}^d)\) be compactly supported and  $F$ be its continuous BDF w.r.t some compactly and convexly supported \(\nu\). Let \(F_n\) be an empirical counterpart of \(F\) presented in~Def.\ref{def:eBDF}, then
\begin{equation*}
\sup_{x\in\mathbb{R}^d}
\left\|F_n(x)-F(x)\right \|
\overset{a.s.}{\longrightarrow} 0.
\end{equation*}
\end{theorem}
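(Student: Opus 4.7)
The plan is to proceed in three stages: first establish almost sure convergence of the empirical measures to their population versions, then invoke stability of optimal transport to obtain pointwise convergence of the Brenier maps, and finally upgrade pointwise to uniform convergence using cyclical monotonicity together with the assumed continuity of $F$.

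In the first stage, the strong law of large numbers combined with Varadarajan's theorem yields that almost surely $\mu_n\to\mu$ and $\nu_n\to\nu$ weakly. Since $\mu$ and $\nu$ are both compactly supported, the weak convergence lifts to convergence in the $2$-Wasserstein metric, so that $W_2(\mu_n,\mu)\to 0$ and $W_2(\nu_n,\nu)\to 0$ almost surely.

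In the second stage I use Brenier's theorem to represent $\tilde F = \nabla\varphi$ and $\tilde F_n = \nabla\varphi_n$ via convex potentials $\varphi,\varphi_n$. Stability of optimal transport (see \citet{Villani2003}, Chapter 2) then ensures that, after fixing a common normalization, $\varphi_n\to\varphi$ locally uniformly on $\mathrm{int}(\mathrm{supp}(\mu))$; and at every point where $\nabla\varphi$ is continuous, locally uniform convergence of convex functions forces $\nabla\varphi_n(x)\to\nabla\varphi(x)$. By the continuity assumption on $F$, this gives pointwise convergence of $F_n$ to $F$ at every $x\in\mathrm{supp}(\mu)$.

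In the third stage I promote pointwise to uniform convergence through the classical fact from convex analysis that if a sequence of convex functions on an open convex set converges pointwise to a convex function whose gradient is everywhere continuous, then the gradients converge uniformly on every compact subset. Combined with the extension procedure of Appendix~\ref{sec:dom_F}---both $F$ and $F_n$ take values in the compact convex set $N=\mathrm{supp}(\nu)$ and are extended in a compatible manner outside $\mathrm{supp}(\mu)$---this yields the desired $\sup_{x\in\R^d}\norm{F_n(x)-F(x)}\to 0$. The main obstacle I anticipate is precisely this last point: the convex-analytic stability applies cleanly only inside $\mathrm{supp}(\mu)$, so one must verify that the Lebesgue representation of the appendix propagates the convergence into the complement of the support without destroying uniformity. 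Showing that the extension is built from the same convex-potential data as the interior map, and that values on the complement are determined by behavior on a shrinking neighborhood of the boundary, will be the technical core of the argument.
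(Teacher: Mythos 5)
There is a genuine gap, and you have in fact put your finger on it yourself in the last paragraph: the convex‐analytic stability result you invoke (pointwise convergence of convex functions on an open set implies uniform convergence of gradients on compacts) only produces locally uniform convergence inside $\mathrm{int}(\mathrm{supp}(\mu))$, whereas the theorem asserts uniformity over all of $\R^d$. Your plan is to patch this by arguing that the extension in Appendix~\ref{sec:dom_F} ``propagates'' the convergence to the complement, but as stated there is no mechanism that makes this happen: on the complement of $\mathrm{supp}(\mu)$ you have no pointwise control on $\varphi_n$, and the Lebesgue representation is not a local operation near the boundary. In addition, the stability result you cite from \citet{Villani2003}, Chapter~2 gives convergence of optimal plans and $\mu$-a.e.\ convergence of maps, not locally uniform convergence of normalized potentials, so stage two also needs an argument.

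The paper sidesteps this obstacle by running the argument on the dual side. It considers Kantorovich potentials $\psi_n,\psi$ as convex functions on the \emph{compact} codomain $N=\mathrm{supp}(\nu)$, not on $\mathrm{supp}(\mu)$. Uniform convergence $\psi_n\to\psi$ on $N$ is obtained via Arzel\'a--Ascoli (the $\psi_n$ are Lipschitz with a common constant and pointwise bounded) combined with an optimality comparison $E_n(\psi_n)\le E_n(\psi)\to E(\psi)\le E(\psi_n)$ and Varadarajan's theorem. The crucial step that your proposal is missing is then Lemma~\ref{lem:unifSubdifferential}: because $\psi_{n,N}$ and $\psi_N$ have domain contained in the compact set $N$, their Legendre conjugates $\psi_{n,N}^*,\psi_N^*$ are globally Lipschitz with constant $\le\max_{y\in N}\norm{y}$, and this is precisely what lets uniform convergence of the potentials on $N$ transfer to uniform convergence of $\partial(\psi_{n,N}^*)$ to $\nabla\psi_N^*$ over \emph{all} of $\R^d$, not merely on compacts. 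Working with the primal potentials $\varphi_n$ on $\mathrm{supp}(\mu)$ as you do, no such global Lipschitz bound on the domain side is available, which is why your stage three cannot close by classical convex‐function stability alone. If you want to salvage your route you should switch to the dual potentials on $N$ (or prove an analogue of Lemma~\ref{lem:unifSubdifferential}), rather than trying to extend control across $\partial\,\mathrm{supp}(\mu)$.
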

However, the explicit rate of convergence \(r(n)\) is still an open question.
A proof of the above theorem is presented in Appendix~\ref{sec:proofGK}.

\section{Non-parametric testing}
\label{section:test}
The intuition behind the testing procedure is following.
Let \(\{X_1,..., X_n\}\), \(X_i\overset{iid}{\sim}\mu_{X}\) 
and \(\{Y_1,..., Y_m\}\), \(Y_j\overset{iid}{\sim}\mu_{Y}\)
be samples in hand where \(\mu_{X}, \mu_{Y} \in \mathcal{P}^{ac}_{2}(\R^d)\)
are compactly supported.
Let also \(\mu\) be a two-component mixture, 
s.t. 
 \[
 \mu \eqdef t\mu_X+ (1 - t)\mu_Y,
 \quad
 t \in [0, 1].
 \]
Let \(T\) be a push-forward of \(\mu\) to \(\nu\): \(T\#\mu = \nu\).
Without loss of generality we choose \(\nu = \mathcal{U}[B_{d}(1)] \) for a testing procedure.
As soon as the Brenier distribution function \(F\) coincides
with optimal transportation map \(T\#\mu = \nu\)
on the \(\text{supp}(\nu)\), we replace \(T\) by \(F\) 
in what follows.
A map \(F\) generates the following partition on a target measure:
\[
\nu = t \nu_X+ (1 - t)\nu_Y, 
\quad
\nu_X(A) = \mu_X\bigl( F^{-1}(A)\bigr),
\quad
\nu_Y(A) = \mu_Y\bigl( F^{-1}(A)\bigr),
\]
{where \( A\) is an element of the induced Borel 
\(\sigma\)-algebra on the support of \(\nu\)}.
Note that in case \( \mu_X = \mu_Y \), their images coincide as well: \(\nu_X = \nu_Y \).
Thus, under homogeneity hypothesis \(H_0\),  
whatever \(\mu_X \) and \(\mu_Y \) are, their images are the same, i.e.
\(\nu_X = \nu_Y = \mathcal{U}[B_d(1)] \).
In other words, the transformation of a data set by \(F\)
allows to avoid dependency on the original distribution $\mu$, this entails
\textit{pivotality} of the test and essentially reduces
computational costs for constructing
rejection regions of the test \(D_{nm}\)~\eqref{def:test}. This issue is discussed below.

\subsection*{Test statistics}
Let \(\mu^n_X \) and \(\mu^m_Y \) be empirical measures,
generated from the samples 
\( (X_1,..., X_n)\) 
and 
\((Y_1,..., Y_m) \) 
respectively.
Let also \(\mathcal{U}_{nm} = (U_1,..., U_{n+m})\) be a \((n+m)\)-partition of \(\text{supp}(\nu)\)
and let \(\nu_{nm}\) be a uniform distribution on this grid.
An empirical counterpart of \(F\) is defined as a push forward of the mixture
\(\mu^{nm} \) to \(\nu^{nm} \)
\[
\mu^{nm} \eqdef \mfrac{n}{n+m}\mu^n_X + \mfrac{m}{n+m}\mu^m_{Y}, 
\quad
F_{nm}\#\mu^{nm} = \nu^{nm}.
\]
Thus, keeping in mind that \(\nu^{nm}_{X} = F_{nm}{\#}\mu^n_{X} \) 
and \(\nu^{nm}_{Y} = F_{nm}{\#}\mu^n_{Y} \) 
the test is written as
\[
D_{nm} \eqdef W_2\bigl(\nu^{nm}_{X}, \nu^{nm}_{Y}  \bigr) \geq z^{nm}_{\alpha},
\]
where \(z^{nm}_{\alpha} \) is an \(\alpha\)-critical value. 
It is computed using the fact, that
if \( \mu_X = \mu_Y \), all permutations of
\((F(X_1), ..., F(Y_m)) \) on \( (U_1,.., U_{m + n})\)
have the same probability.

\begin{lemma}
	\label{corollary:permutations}
	Let \( (X_1,..., X_{n + m} )\) be an i.i.d. sample from \(\mu \) 
	and let \\
	\(\nu^{nm} = \mathcal{U}[U_1,.., U_{m + n}] \)
	be a uniform distribution on a grid \((U_1,.., U_{m + n})\).
	Then all \(n!m! \) permutations of \(F(X_1),..., F(X_{n+m}) \)	
	on \(\nu^{nm} \) are equally probable:
	\[
	\P_{\mu}\bigl((F(X_1),..., F(X_{n + m})) = (U_1,..., U_{n + m})  \bigr) = \frac{1}{n!m!}.
	\]
\end{lemma}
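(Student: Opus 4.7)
The plan is to deduce the result from the exchangeability of the combined sample under $H_0$ together with the label-invariance of the empirical Brenier construction. Under the null, $X_1, \ldots, X_{n+m}$ are i.i.d. from $\mu$, so for every $\sigma$ in the symmetric group $S_{n+m}$ the joint law of $(X_{\sigma(1)}, \ldots, X_{\sigma(n+m)})$ coincides with that of $(X_1, \ldots, X_{n+m})$.

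First, I would invoke Theorem~\ref{prop:dpf}: the empirical map $F_{nm}$ that pushes $\mu^{nm}$ forward to $\nu^{nm}$ is uniquely determined, via its cyclical monotonicity, by the unordered multiset $\{X_1, \ldots, X_{n+m}\}$ together with the fixed grid $\{U_1, \ldots, U_{n+m}\}$. Writing $F_{nm}(X_i) = U_{\tau(i)}$ defines a random permutation $\tau \in S_{n+m}$. Because the cyclical monotonicity condition~\eqref{eq:cyclMonoDef} is symmetric in the labels of the input points, relabelling the inputs by $\sigma$ replaces $\tau$ by $\tau \circ \sigma$; this is the label-equivariance of the construction.

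Combining exchangeability with equivariance yields $\P(\tau = \rho) = \P(\tau = \rho \circ \sigma^{-1})$ for every $\rho, \sigma \in S_{n+m}$. Hence the distribution of $\tau$ is invariant under the transitive regular action of $S_{n+m}$ on itself and must therefore be uniform. The stated probability is then recovered by collecting those permutations that fix the partition of $\{U_1, \ldots, U_{n+m}\}$ into the block of $X$-images and the block of $Y$-images: there are exactly $n!\,m!$ such internal rearrangements, each equidistributed within the block.

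No substantial analytic difficulty is anticipated. The only subtle point, and the one that makes the whole argument work, is precisely the label-invariance embedded in Theorem~\ref{prop:dpf}: the discrete cyclically monotone transport depends on the input sample only through its unordered atomic configuration. Once this is in hand, the remainder reduces to a clean symmetry computation on the symmetric group, with no estimation or limit argument required.
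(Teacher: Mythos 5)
Your proposal is correct and follows essentially the same route as the paper: the paper reduces the lemma to Lemma~\ref{lemma:permutations}, whose proof is exactly the pairing of exchangeability of the i.i.d.\ sample with the label-invariance of the deterministic transport map, which is what you invoke. You simply make the group-theoretic bookkeeping explicit (equivariance $\tau \mapsto \tau\circ\sigma$ and uniformity from the regular action of $S_{n+m}$), which is a slightly more careful packaging of the same idea rather than a different approach.
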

The statement follows directly from Lemma~\ref{lemma:permutations}.
This fact plays a key part in the computation of the rejection region.
For a predefined \(\nu \) and a fixed partition \(U_1,...,U_{n+m} \) 
one can compute it only once and use afterwards for all data sets 
of \((n, m)\)-size. The procedure is presented in Algorithm~\ref{alg:computation_z}.
Thus, quantile generation procedure controls the I type error automatically.
The next theorem provides an asymptotic upper bound \(\beta_{nm} \) on the II type error:
\[
\P\left( D_{nm} \leq z^{nm}_{\alpha} \big| W_{2}(\mu_{X}, \mu_{Y}) = \Delta \right) \leq \beta_{nm},
\]
with \(\Delta > 0 \).
\begin{theorem}[Upper bound on II type error]

	\label{theorem:beta}
	The II type error bound holds with \(\mu_{X}\) probability \(\P_{X} \geq 1 - e^{-cx}  \)
	and \(\mu_{Y} \) probability \(\P_{Y} \geq 1 - e^{-cx} \)
				\[
				\beta_{nm} = \P\left(z^{nm}_{\alpha} \geq \Gamma_{nm}  \right),
				\]
			with
			\[
			\Gamma_{nm} \eqdef
						 \Delta \left\|\ F^{-1}\right\|^{-1}_{L^2(\nu_{X})} - \left( \frac{x}{n}\right)^{2/d} - \left( \frac{x}{m}\right)^{2/d} - r(n+m),
			\]
			where each summand is a price to pay:
			\(\Delta \left\|\ F^{-1}\right\|^{-1}_{L^2(\nu_{X})}\) comes from transportation of original mixture \(\mu\) to \(\nu\) and depends not only on \(\Delta\), but also on the relative disposition of \(\mu\) and \(\nu\): \(F^{-1}\# \nu = \mu\), \(\left(\tfrac{x}{n}\right)^{2/d}\), \(\left(\tfrac{x}{m}\right)^{2/d}\) are discretisation errors, and 
			\(r(n+m)\) comes from the fact (Glivenko-Cantelli theorem), that we use a push-forward \(F_{nm}\) to a discreet grid instead of using as a target measure \(\nu\).
\end{theorem}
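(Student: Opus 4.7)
The plan is to establish a deterministic lower bound $\Gamma_{nm}$ on the test statistic $D_{nm}$ that holds with sampling probability at least $1 - 2e^{-cx}$; then the event $\{D_{nm} \leq z^{nm}_\alpha\}$ is contained in the union of this $2e^{-cx}$-exceptional event and the deterministic event $\{z^{nm}_\alpha \geq \Gamma_{nm}\}$, producing the advertised $\beta_{nm}$. The construction of $\Gamma_{nm}$ is a matter of repeatedly applying the triangle inequality for $W_2$, combined with one nontrivial lower bound that accounts for the $\Delta$-term.

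\textbf{Triangle decomposition.} Two uses of the triangle inequality give
\[
D_{nm} \geq W_2(F\#\mu_X, F\#\mu_Y) - W_2(F_{nm}\#\mu^n_X, F\#\mu_X) - W_2(F_{nm}\#\mu^m_Y, F\#\mu_Y),
\]
and each correction term is split further as
\[
W_2(F_{nm}\#\mu^n_X, F\#\mu_X) \leq W_2(F_{nm}\#\mu^n_X, F\#\mu^n_X) + W_2(F\#\mu^n_X, F\#\mu_X),
\]
and likewise for $Y$. Pushing the diagonal coupling of $\mu^n_X$ through the pair $(F_{nm}, F)$ yields $W_2^2(F_{nm}\#\mu^n_X, F\#\mu^n_X) \leq \int \|F_{nm}-F\|^2\, d\mu^n_X \leq r(n+m)^2$ via Theorem~\ref{mGK}, which accounts for the Glivenko--Cantelli term. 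Concentration of the empirical measure in $W_2$ on the compact support of $\mu_X$ (a Fournier--Guillin-type bound) then gives, with $\mu_X$-probability at least $1-e^{-cx}$, the bound $W_2(F\#\mu^n_X, F\#\mu_X) \leq (x/n)^{2/d}$, and analogously $(x/m)^{2/d}$ for the $Y$-sample with probability $1-e^{-cx}$.

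\textbf{The main lower bound.} To lower-bound $W_2(F\#\mu_X, F\#\mu_Y)$ by $\Delta/\|F^{-1}\|_{L^2(\nu_X)}$, set $\nu_X = F\#\mu_X$, $\nu_Y = F\#\mu_Y$ and take an optimal $W_2$-coupling $\pi^*$ of $(\nu_X,\nu_Y)$. The pushforward $(F^{-1},F^{-1})\#\pi^*$ is a valid coupling of $(\mu_X,\mu_Y)$, so
\[
\Delta^2 = W_2^2(\mu_X,\mu_Y) \leq \int \|F^{-1}(u) - F^{-1}(v)\|^2\, d\pi^*(u,v).
\]
Converting the right-hand side into $\|F^{-1}\|^{2}_{L^2(\nu_X)} \cdot W_2^2(\nu_X,\nu_Y)$ is where I expect the main difficulty: it demands an $L^2(\nu_X)$-Lipschitz-type estimate on $F^{-1} = \nabla \phi^*$ (the gradient of the Legendre dual of the Brenier potential of $\mu$ with respect to $\nu$) along the transport plan, which is an interior regularity property of Caffarelli type. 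This step is also the one that explains why the leading summand of $\Gamma_{nm}$ involves the norm $\|F^{-1}\|_{L^2(\nu_X)}^{-1}$ rather than a pointwise Lipschitz constant of $F^{-1}$.

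\textbf{Assembly.} Combining the three steps, on the intersection of the two high-probability sampling events,
\[
D_{nm} \geq \frac{\Delta}{\|F^{-1}\|_{L^2(\nu_X)}} - \left(\frac{x}{n}\right)^{2/d} - \left(\frac{x}{m}\right)^{2/d} - r(n+m) = \Gamma_{nm}.
\]
Hence $\{D_{nm} \leq z^{nm}_\alpha\}$ is contained in a $2e^{-cx}$-event together with the deterministic event $\{z^{nm}_\alpha \geq \Gamma_{nm}\}$, yielding the claim.
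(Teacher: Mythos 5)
Your proposal follows essentially the same route as the paper's own proof: the same two-level triangle decomposition through $\nu_X = F\#\mu_X$, $\nu_X^n = F\#\mu_X^n$, and $\nu_X^{nm} = F_{nm}\#\mu_X^n$; the diagonal-coupling bound $W_2(F_{nm}\#\mu_X^n, F\#\mu_X^n) \leq \sup\|F_{nm}-F\|$ justified by Theorem~\ref{mGK}; the Fournier--Guillin concentration for the empirical-measure term; and the reduction $\Delta_\nu \geq \Delta/\|F^{-1}\|_{L^2(\nu_X)}$ obtained by pushing a coupling of $(\nu_X,\nu_Y)$ backward through $F^{-1}$. The one step you correctly flag as delicate, namely passing from $\int\|F^{-1}(u)-F^{-1}(v)\|^2\,d\pi^*$ to $\|F^{-1}\|^2_{L^2(\nu_X)}W_2^2(\nu_X,\nu_Y)$ via Caffarelli-type regularity of $F^{-1}$, is treated with exactly the same level of brevity in the paper (which simply cites Villani, Thm.~12.50), so your proposal reproduces the paper's argument including its one somewhat informal step.
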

The case of unbounded support of \(\mu \) together with the explicit representations of rate \(r(n, m) \) are involved question
 and considered as an object for further study.

\section{Algorithm descriptions and experiments}
\label{section:algo}
\begin{algorithm}[H]
	\label{alg:testing}
	\KwData{\((X_1,..., X_n) \), \((Y_1,..., Y_m) \), false-alarm rate \(\alpha \) }
	\KwResult{accept/reject \(H_0\)}
	generate a uniform partition \(\mathcal{U}_{n +m} = (u_1,..., u_{n+m})  \) of a unit ball \(\Us \)\\
	compute \( \zz_{nm}(\alpha) \) with Algorithm~\ref{alg:resampling}\\
	compute optimal transport \(T \) of \(X_1,..., X_n, Y_1,..., Y_m \) to \(\mathcal{U}_{n +m} \)\\
	define
	\[
	\nu_n \eqdef \frac{1}{n}\sum_{i = 1}^n \delta_{T(X_i)}, 
	\quad 
	\nu_m \eqdef \frac{1}{m}\sum_{j = 1}^m \delta_{T(Y_j)}
	\]
	\uIf{\( \text{dist}(\nu_n, \nu_m) \geq  \zz_{nm}(\alpha)\)}{reject \(H_0\)} 
	\Else{
		accept \( H_0\)
	}
	\caption{Computation of critical value \(	\zz_{nm}(\alpha)\)}
\end{algorithm}
\begin{algorithm}[H]
	\label{alg:resampling}
	\KwData{False-alarm rate \(\alpha \), uniform partition \(\mathcal{U}_{n +m}\) of a unit ball \(\Us \)}
	\KwResult{\( \zz_{nm}(\alpha) \)}
	initialize the number of iterations \(M\)\\
	\For{\( i  \in \{1,...M \}\)}{ 
		generate a partition of \(\mathcal{U}^{(i)}_{n + m} = \mathcal{U}^{(i)}_{n} \sqcup \mathcal{U}^{(i)}_{m} \):\\
		\[\mathcal{U}^{(i)}_{n} \eqdef (u_{\small \sigma_i(1)},..., u_{\small \sigma_i(n)}   ),
		\quad
		\mathcal{U}^{(i)}_{m} \eqdef (u_{\small \sigma_i(n + 1)},..., u_{\small \sigma_i(m + n)}   ),
		\]
		where \(\sigma_i(\cdot) \) is a random permutation;\\
		construct
		\[
		\nu^n_1 = \frac{1}{n}\sum_{k = 1}^n \delta_{u_{\small \sigma_i(k)}},
		\quad
		\nu^m_1 = \frac{1}{m}\sum_{k =  n+ 1}^{m+ n} \delta_{u_{\small \sigma_i(k)}};
		\]
		compute
		\[
		D^{(i)}_{nm} = W_2(\nu^n_1, \nu^m_2);
		\]
	}
	compute ecdf \(F_{M}(t) \):
	\[
	F_{M}(t) = \frac{1}{M}\sum_{i = 1}^M\mathbb{I}\bigl\{ D^{(i)}_{nm} \leq t \bigr\} 
	\]
	compute quantile \(\zz_{nm}(\alpha)\):
	\[
	\zz_{nm}(\alpha) = \inf_{t \geq 0} \bigl\{F_{M}(t)) \geq 1 - \alpha \bigr\}
	\]
	\caption{Computation of critical value \(	\zz_{nm}(\alpha)\)}	\label{alg:computation_z}
\end{algorithm}
\newpage
\paragraph{Experiments}
In this section we consider the data set, related to the quality assessment of the Portuguese white and red "Vinho Verde" wine~\citet{cortez2009modeling}.
Each sample is \(11\)-dimensional vector, with the 
following physico-chemical characteristics: 
fixed acidity, volatile acidity, citric acid, residual sugar, 
chlorides, free sulfur dioxide, total sulfur dioxide, 
density, pH, sulphates and alcohol. 
The sample is then graded by a committee of experts 
between 0 (very bad) and 10 (very excellent).
We are interested whether there exists a statistically significant
difference in chemical composition of wines, that belong to
different quality groups according to the experts opinion.
To carry out the assessment, 3 groups
marked with "5"-, "6"- and "7"-label of white colour and 2 groups ("5" and "6" respectively), were selected.
The testing results show, that the difference between the groups indeed exists.
Fig.~\ref{fig:wine} provides the rate of convergence
of the II type error with the growth of the samples of size, for simplicity we let \(m = n\). 
Left box corresponds to white wine, while the right one -- to the red wine. 
Rejection level is set as \(\alpha = .95 \).\\
\begin{figure}[!h]
	\label{fig:wine}
	\begin{center}
		\includegraphics[width = \linewidth]{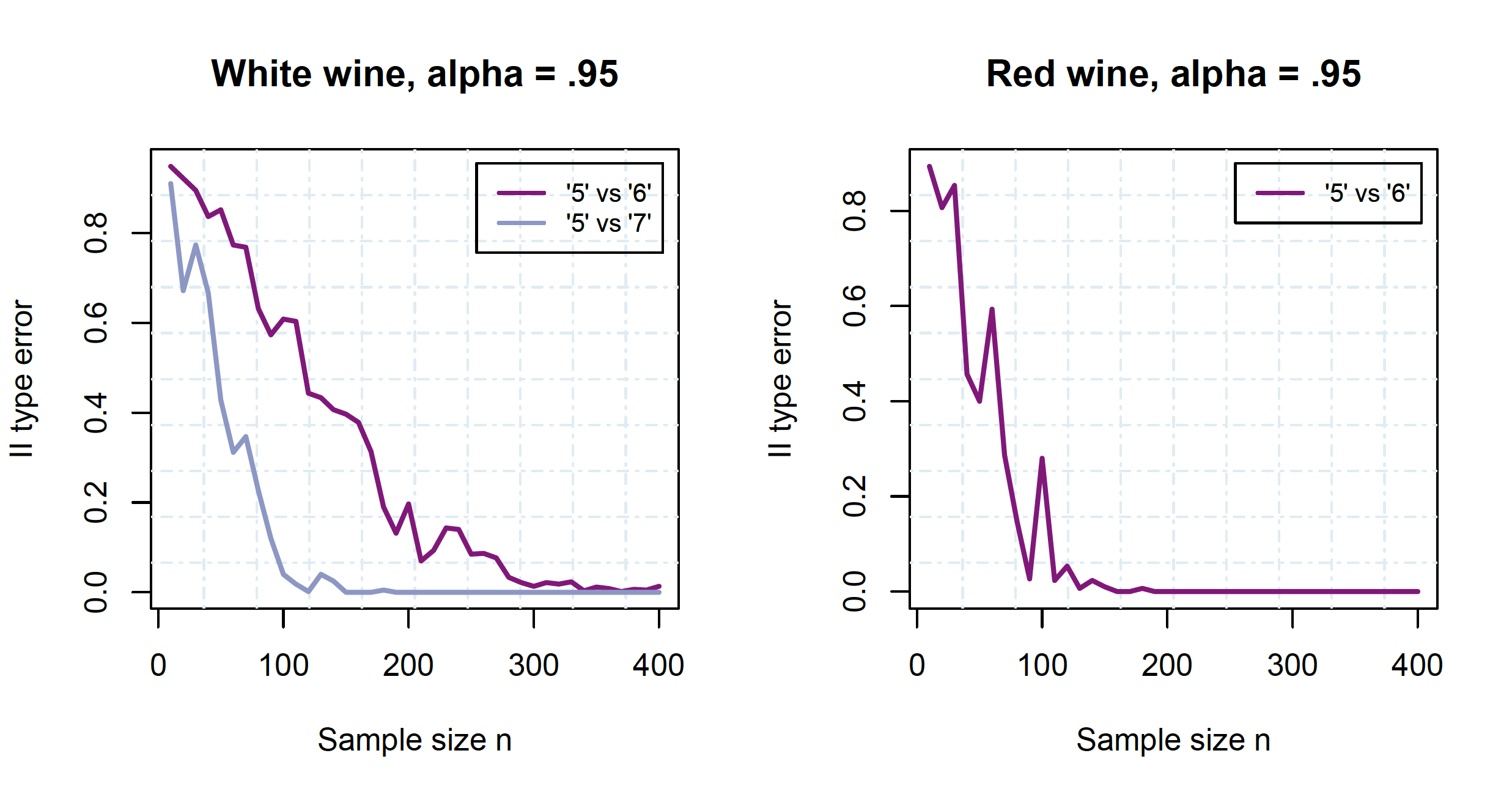}
	\end{center}	
\end{figure}

\section*{Acknowledgements}
Authors are grateful to Prof. Markus Rei{\ss} and Alexey Kroshnin for their valuable remarks and comments on this work.

\bibliographystyle{plainnat}
\bibliography{references}

\begin{thebibliography}{31}
\providecommand{\natexlab}[1]{#1}
\providecommand{\url}[1]{\texttt{#1}}
\expandafter\ifx\csname urlstyle\endcsname\relax
  \providecommand{\doi}[1]{doi: #1}\else
  \providecommand{\doi}{doi: \begingroup \urlstyle{rm}\Url}\fi

\bibitem[Ambrosio et~al.(2008)Ambrosio, Gigli, and Savare]{Ambrosio2008}
Luigi Ambrosio, Nicola Gigli, and Giuseppe Savare.
\newblock \emph{Gradient Flows - In Metric Spaces and in the Space of
  Probability Measures}.
\newblock Springer Science \& Business Media, Berlin Heidelberg, 2008.

\bibitem[Billingsley(2013)]{Billingsley2013}
Patrick Billingsley.
\newblock \emph{Convergence of Probability Measures -}.
\newblock John Wiley \& Sons, New York, 2013.

\bibitem[Brenier(1991)]{brenier1991polar}
Yann Brenier.
\newblock Polar factorization and monotone rearrangement of vector-valued
  functions.
\newblock \emph{Communications on pure and applied mathematics}, 44\penalty0
  (4):\penalty0 375--417, 1991.

\bibitem[Chernozhukov et~al.(2017)Chernozhukov, Galichon, Hallin, Henry,
  et~al.]{chernozhukov2017monge}
Victor Chernozhukov, Alfred Galichon, Marc Hallin, Marc Henry, et~al.
\newblock Monge--kantorovich depth, quantiles, ranks and signs.
\newblock \emph{The Annals of Statistics}, 45\penalty0 (1):\penalty0 223--256,
  2017.

\bibitem[Cortez et~al.(2009)Cortez, Cerdeira, Almeida, Matos, and
  Reis]{cortez2009modeling}
Paulo Cortez, Ant{\'o}nio Cerdeira, Fernando Almeida, Telmo Matos, and Jos{\'e}
  Reis.
\newblock Modeling wine preferences by data mining from physicochemical
  properties.
\newblock \emph{Decision Support Systems}, 47\penalty0 (4):\penalty0 547--553,
  2009.

\bibitem[DeGroot and Schervish(2011)]{DeGroot2011}
Morris~H. DeGroot and Mark~J. Schervish.
\newblock \emph{Probability and Statistics -}.
\newblock Pearson Education, Amsterdam, 2011.

\bibitem[Dudley(2014)]{Dudley2014}
R.~M. Dudley.
\newblock \emph{Uniform Central Limit Theorems -}.
\newblock Cambridge University Press, Cambridge, 2014.

\bibitem[Figalli et~al.(2011)Figalli, Rifford, and
  Villani]{figalli2011necessary}
Alessio Figalli, Ludovic Rifford, and C{\'e}dric Villani.
\newblock Necessary and sufficient conditions for continuity of optimal
  transport maps on riemannian manifolds.
\newblock \emph{Tohoku Mathematical Journal, Second Series}, 63\penalty0
  (4):\penalty0 855--876, 2011.

\bibitem[Fournier and Guillin(2015)]{fournier2015rate}
Nicolas Fournier and Arnaud Guillin.
\newblock On the rate of convergence in wasserstein distance of the empirical
  measure.
\newblock \emph{Probability Theory and Related Fields}, 162\penalty0
  (3-4):\penalty0 707--738, 2015.

\bibitem[Georgii(2013)]{Georgii2013}
Hans-Otto Georgii.
\newblock \emph{Stochastics - Introduction to Probability and Statistics}.
\newblock Walter de Gruyter, Berlin, 2013.

\bibitem[Hallin et~al.(2002)Hallin, Paindaveine, et~al.]{hallin2002optimal}
Marc Hallin, Davy Paindaveine, et~al.
\newblock Optimal tests for multivariate location based on interdirections and
  pseudo-mahalanobis ranks.
\newblock \emph{The Annals of Statistics}, 30\penalty0 (4):\penalty0
  1103--1133, 2002.

\bibitem[Hallin et~al.(2006{\natexlab{a}})Hallin, Oja, Paindaveine,
  et~al.]{hallin2006semiparametrically2}
Marc Hallin, Hannu Oja, Davy Paindaveine, et~al.
\newblock Semiparametrically efficient rank-based inference for shape. ii.
  optimal r-estimation of shape.
\newblock \emph{The Annals of Statistics}, 34\penalty0 (6):\penalty0
  2757--2789, 2006{\natexlab{a}}.

\bibitem[Hallin et~al.(2006{\natexlab{b}})Hallin, Paindaveine,
  et~al.]{hallin2006semiparametrically1}
Marc Hallin, Davy Paindaveine, et~al.
\newblock Semiparametrically efficient rank-based inference for shape. i.
  optimal rank-based tests for sphericity.
\newblock \emph{The Annals of Statistics}, 34\penalty0 (6):\penalty0
  2707--2756, 2006{\natexlab{b}}.

\bibitem[It{\^o}(1984)]{Ito1984}
Kiyosi It{\^o}.
\newblock \emph{An Introduction to Probability Theory}.
\newblock Cambridge University Press, 1984.

\bibitem[Jure{\v{c}}kov{\'a} et~al.(2012)Jure{\v{c}}kov{\'a}, Kalina,
  et~al.]{jurevckova2012nonparametric}
Jana Jure{\v{c}}kov{\'a}, Jan Kalina, et~al.
\newblock Nonparametric multivariate rank tests and their unbiasedness.
\newblock \emph{Bernoulli}, 18\penalty0 (1):\penalty0 229--251, 2012.

\bibitem[Mann and Whitney(1947)]{mann1947test}
Henry~B Mann and Donald~R Whitney.
\newblock On a test of whether one of two random variables is stochastically
  larger than the other.
\newblock \emph{The annals of mathematical statistics}, pages 50--60, 1947.

\bibitem[Monge(1781)]{monge1781}
Gaspard Monge.
\newblock Memoire sur la theorie des deblais et des remblais.
\newblock In \emph{Histoire de l’Academie Royale des Sciences de Paris},
  pages 666--705. J. Boudot, 1781.

\bibitem[M{\"o}tt{\"o}nen and Oja(1995)]{mottonen1995multivariate}
Jyrki M{\"o}tt{\"o}nen and Hannu Oja.
\newblock Multivariate spatial sign and rank methods.
\newblock \emph{Journaltitle of Nonparametric Statistics}, 5\penalty0
  (2):\penalty0 201--213, 1995.

\bibitem[Oja(2010)]{oja2010multivariate}
Hannu Oja.
\newblock \emph{Multivariate nonparametric methods with R: an approach based on
  spatial signs and ranks}.
\newblock Springer Science \& Business Media, 2010.

\bibitem[Pollard(1990)]{pollard1990empirical}
David Pollard.
\newblock Empirical processes: theory and applications.
\newblock In \emph{NSF-CBMS regional conference series in probability and
  statistics}, pages i--86. JSTOR, 1990.

\bibitem[Rachev and Rüschendorf(2006)]{Rachev2006}
Svetlozar~T. Rachev and Ludger Rüschendorf.
\newblock \emph{Mass Transportation Problems - Volume 1: Theory}.
\newblock Springer Science \& Business Media, Berlin Heidelberg, 2006.

\bibitem[Ramdas et~al.(2017)Ramdas, Trillos, and Cuturi]{ramdas2017wasserstein}
Aaditya Ramdas, Nicol{\'a}s~Garc{\'\i}a Trillos, and Marco Cuturi.
\newblock On wasserstein two-sample testing and related families of
  nonparametric tests.
\newblock \emph{Entropy}, 19\penalty0 (2):\penalty0 47, 2017.

\bibitem[Randles(1989)]{randles1989distribution}
Ronald~H Randles.
\newblock A distribution-free multivariate sign test based on interdirections.
\newblock \emph{Journal of the American Statistical Association}, 84\penalty0
  (408):\penalty0 1045--1050, 1989.

\bibitem[Rockafellar(1970)]{Rockafellar1970}
R.~Tyrrell Rockafellar.
\newblock \emph{Convex Analysis -}.
\newblock University Press, Oxford, 1970.

\bibitem[Rüschendorf(2014)]{Rueschendorf2014}
Ludger Rüschendorf.
\newblock \emph{Mathematische Statistik -}.
\newblock Springer-Verlag, Berlin Heidelberg New York, 2014.

\bibitem[Steele et~al.(1978)]{steele1978empirical}
J~Michael Steele et~al.
\newblock Empirical discrepancies and subadditive processes.
\newblock \emph{The Annals of Probability}, 6\penalty0 (1):\penalty0 118--127,
  1978.

\bibitem[Tao(2011)]{Tao2011}
Terence Tao.
\newblock \emph{An Introduction to Measure Theory -}.
\newblock American Mathematical Soc., Heidelberg, 2011.

\bibitem[Vapnik(2013)]{Vapnik2013}
Vladimir Vapnik.
\newblock \emph{The Nature of Statistical Learning Theory -}.
\newblock Springer Science \& Business Media, Berlin Heidelberg, 2013.

\bibitem[Vapnik and Chervonenkis(2015)]{vapnik2015uniform}
Vladimir~N Vapnik and A~Ya Chervonenkis.
\newblock On the uniform convergence of relative frequencies of events to their
  probabilities.
\newblock In \emph{Measures of complexity}, pages 11--30. Springer, 2015.

\bibitem[Villani(2008)]{Villani2003}
C.~Villani.
\newblock \emph{Optimal Transport: Old and New}.
\newblock Grundlehren der mathematischen Wissenschaften. Springer Berlin
  Heidelberg, 2008.
\newblock ISBN 9783540710509.

\bibitem[Wilcoxon(1945)]{wilcoxon1945individual}
Frank Wilcoxon.
\newblock Individual comparisons by ranking methods.
\newblock \emph{Biometrics bulletin}, 1\penalty0 (6):\penalty0 80--83, 1945.

\end{thebibliography}

\appendix
\section{Proofs}
\subsection{Continuation of \(\text{dom}(T)\) to \(\R^d\)}
\label{sec:dom_F}
The procedure of BDF construction relies on the definition of cyclical monotonicity
see Definition~\ref{eq:cyclMonoDef}

Let $\tilde{F}$ be any cyclically monotone map satisfying 
$\tilde{F}\#\tilde{\mu}=\tilde{\nu}$. 
A Lebesgue representation $F:\R^d \to N$ of $\tilde{F}$ 
can be constructed as follows. 
We use the fact that cyclical monotonicity can be defined 
via equation \eqref{eq:cyclMonoDef} in terms of its graph
\begin{equation*}
  \text{Graph}\bigl(\tilde{F}\bigr)\eqdef\bigl\{(x,\tilde{F}(x)) \in \R^d\times\R^d\,\big|\,x\in \text{supp}(\tilde{\mu})\bigr\}.
\end{equation*}
From this we calculate a convex function, whose subgradient contains $\tilde{F}$. 
The whole subgradient will serve as the representation $F$ of $\tilde{F}$.   
Fix therefore some $(x_0,y_0)\in \text{Graph}(\tilde{F})$:

\begin{enumerate}
\item\label{constr:I} Take a countable dense subset of the graph 
$(x_i,y_i)_{i\in\N}\subset \text{Graph}(\tilde{F})$. 
\item\label{const:II} Construct the convex function $\psi:\R^d\to\R_\infty$ 
by Rockafellar's construction \citet[cf. proof of Thm. 24.8]{Rockafellar1970}, such that $\psi(y_0)=0$:
\begin{equation*}
\psi(y)
\eqdef 
\sup_{I\subset\N\atop\text{finite}}\left( \max_{\pi\in\Pi[I\cup\{0\}]}\langle y, x_{\pi(0)} \rangle -\Bigl(\langle y_0, x_{\pi(0)}\rangle -\sum_{i\in I\cup \{0\}}\langle{y_i-y_{\pi(i)}, x_{\pi(i)}}\rangle \Bigr)
\right).
\end{equation*}
\item\label{constr:III} Localise w.r.t $N=\text{supp}(\nu)$:
\begin{equation*}
\psi_N(y)\eqdef \psi(y)+{O}_N(y),
\end{equation*}
where ${O}_N:\R^d\to \R_\infty$ is the inf-indicator function
\begin{equation*}
{O}_N(y)\eqdef\begin{cases}
0 \quad y \in N\\
\infty \quad \text{else}.
\end{cases}
\end{equation*}
\item\label{constr:IV} Obtain the representation $F$ of $\tilde{F}$ by setting for Lebesgue almost all $x\in\R^d$
\begin{equation*}
F(x)\eqdef \nabla_x \psi_N^*(x),
\end{equation*}
where $\psi^*$ denotes the Legendre transform of $\psi$.  We can generally require for all $x\in\R^d$ that $F(x)\in\partial(\psi_N^*)_x$.
\end{enumerate}

Since $\psi_N$ has its domain (the set of $y$ where $\psi_N(y)<\infty$) 
contained in the compact set $N$, its conjugate is always a proper convex function 
$\psi_N^*~:~\R^d\to\R$, 
with Lipschitz-constant $\leq \max_{y\in N}\|y\|$.  
As such, Rademacher's theorem \citet[Thm. 2.2.4]{Tao2011} 
ensures that $F$ is well defined $\lambda$ almost everywhere.  
Note that this construction works even in case of 
a finite cyclically monotone map $\tilde{F}_n$. 
The empirical Brenier distribution function is defined analogously, 
with the additional restriction that its graph $D_{\tilde{F}}$ 
in \ref{constr:I} is finite and write
\begin{equation}
F_n(x)=\nabla_x\psi_{n,N}^*(x)\quad\lambda\text{-a.s. and everywhere }F_n(x)\in\partial(\psi_{n,N}^*)_x.
\end{equation}

\subsection{Counterexamples}
\label{sec:Counter}
\paragraph{Cyclical monotone maps need not be composition stable in $d>1$.}\

The idea is that in one dimension the preservation of orientation 
is ensured by its discreteness (left/right), 
whereas in the multivariate case composition can cause ``small rotations'' 
changing the relative orientation of points locally. 

As a simple counterexample, define the symmetric positive definite matrices
\begin{equation}
A =\frac{1}{4}\begin{pmatrix}1 & \sqrt{3}\\\sqrt{3}&7\end{pmatrix},\qquad B=\begin{pmatrix}1 & -\frac{\sqrt{3}}{2}\\-\frac{\sqrt{3}}{2} & 1\end{pmatrix}.
\end{equation}
Thus the forms $\phi(x)\eqdef x^T Ax - \skal{(\frac{1}{2},\frac{\sqrt{3}}{2})}{x}$ 
and $\psi(x)\eqdef x^T Bx$ are convex and their gradient maps 
cyclically monotone by Rockafellar's theorem. 
Their composition $G=\nabla\psi\circ\nabla\phi$ 
evaluates at $x_1=(0,0)$ and $x_2=(1,0)$ 
to $G(x_1)=(\frac{1}{2},-\frac{\sqrt{3}}{2})$ and $G(x_2)=(0,0)$. 
Thus $G$ is not cyclically monotone as
\begin{equation}
\skal{x_1}{G(x_1)}+\skal{x_2}{G(x_2)}=0\leq \frac{1}{2}=\skal{x_1}{G(x_2)}+\skal{x_2}{G(x_1)}.
\end{equation}

\paragraph{Cyclical monotone maps on absolutely continuous measures need not be continuous in $d>1$.}\

A geometrically comprehensible counterexample could be calculated for $\R^2$.  However, it can simply be extended to $\R^d$ by interpreting the second component of $\R^2$ as the $\R^{d-1}$ component.  We use a variation of the standard parabola $t^2$ in $\R$, $c:\R\to\R$ to bound the support of the measure under consideration, namely
\begin{equation}
  c(t)\eqdef \max(1,\abs{t})\sqrt{\max(1,\abs{t})^2-1}.
\end{equation}
Then consider the absolutely continuous measure $\mu$ given by
\begin{equation}
  d\mu(x_1,x_2)\eqdef\begin{cases}\frac{1}{\pi}\frac{x_2^4-x_1^2}{x_2^6} & c(x_2)\leq \abs{x_1}<x_2^2,\\0 &\text{else.}\end{cases}
\end{equation}
Furthermore define $\nu\eqdef \mathcal{U}[B_2(1)]$, the uniform distribution on the unit ball. The map $F_\mu:\R^2\to B_2(1)$ given by
\begin{equation}
  F_\mu(x)\eqdef\begin{cases}    0& x_1=x_2=0,\\
    \frac{1}{\abs{x_1}}\begin{pmatrix}x_1\\0\end{pmatrix}&x_1\neq 0, x_2^2\leq \abs{x_1},\\
    \frac{1}{x_2^3}\begin{pmatrix}x_1x_2\\x_2^4-x_1^2\end{pmatrix}&c(x_2)\leq \abs{x_1}<x_2^2,\\
    \frac{1}{\norm{x}}\,x& \abs{x_1}<c(x_2),\end{cases}
\end{equation}
cannot be continuous in $0$, because for the sequences $x_n\eqdef (1/n,0)$, $\hat{x}_n\eqdef -x_n$ we would always have that
\begin{equation}
  F_\mu(x_n)=\begin{pmatrix}1\\0\end{pmatrix}\neq-\begin{pmatrix}1 \\0\end{pmatrix}=F_\mu(\hat{x}_n), \text{ however } x_n,\hat{x}_n\to 0.
\end{equation}
Now we show that $F_\mu$ is indeed a BDF of $\mu$.  At first observe that $F_\mu$ is continuously differentible for all $x\in\R^2\setminus \{0\}$.  Given the theory for the solution to the Monge-Amp\'ere equation, we only need to check the following condition:
\begin{equation*}
f(x)=g(F(x))\det(\nabla F)_x.
\end{equation*}
We calculate
\begin{equation}
  d\nu(F_\mu(x))\det(\nabla F_\mu)_x=\frac{1}{\pi}\frac{x_2^4-x_1^2}{x_2^6},
\end{equation}
whenever $c(x_2)\leq\abs{x_1}<x_2^2$, and $0$ otherwise.  Thus $F_\mu$ is a BDF of $\mu$.  Since $x=0$ is the only critical point, any $F_\mu$ with $F_\mu(0)\in\text{conv}((1,0),-(1,0))$ would be an equally valid BDF of $\mu$.

\subsection{Proof of multivariate Glivenko-Cantelli theorem}
\label{sec:proofGK}
The next lemma plays a key role in the proof of Theorem~\ref{mGK}
\begin{lemma}[Uniform convergence of subdifferentials]
\label{lem:unifSubdifferential}
Let $N\subset\R^d$ be compact and convex, 
$f_n,f:N\to\R$ convex functions, 
and $f_{n,N}(y)=f_n(y)+{O}_N(y)$, $f_N(y)=f(y)+{O}_N(y)$ for $y\in \R^d$ their extensions to $\R^d$. 
Let furthermore $\nabla f_N^*:\R^d\to N$ be continuous. 
Let also $M \subset \R^d$ be compact with $\nabla f_N^*(M)=N$. 
Then it holds
\begin{equation*}
  \sup_{y\in N}\abs{f_n(y)-f(y)}\to 0 
  \implies 
  \sup_{x\in \R^d}\norm{\partial (f_{n,N}^*)_x-\nabla_x f_N^*(x)}\to 0.
\end{equation*}
\end{lemma}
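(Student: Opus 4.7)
The approach combines the classical stability of subdifferentials under uniform convergence of convex functions with a compactness-based extension from bounded regions to all of $\R^d$.

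First I would transfer the hypothesis to the Legendre transforms: since both $f_{n,N}^*$ and $f_N^*$ are suprema of $\langle x, \cdot\rangle$ perturbed on the common compact set $N$, one gets
\[
\sup_{x \in \R^d}\abs{f_{n,N}^*(x) - f_N^*(x)} \leq \sup_{y \in N}\abs{f_n(y) - f(y)} \to 0,
\]
and the identification $\partial(f_{n,N}^*)_x = \argmax{y \in N}\,(\langle x, y\rangle - f_n(y)) \subset N$ confines all subdifferentials to the fixed compact set $N$. On any compact $K \subset \R^d$, uniform convergence of the subdifferentials to $\nabla f_N^*$ then follows by a standard contradiction argument: extract $x_k \to x_\infty \in K$ and $y_k \in \partial(f_{n_k,N}^*)_{x_k}$ with $y_k \to y^* \in N$; pass to the limit in the subgradient inequality $f_{n_k,N}^*(z) \geq f_{n_k,N}^*(x_k) + \langle y_k, z - x_k\rangle$ using the uniform convergence just obtained, and invoke single-valuedness of $\partial f_N^* = \nabla f_N^*$ (from continuity of $\nabla f_N^*$) to conclude $y^* = \nabla f_N^*(x_\infty)$, contradicting the assumed gap.

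Next I would upgrade to all of $\R^d$ using $\nabla f_N^*(M) = N$ with $M$ compact. Again by contradiction, extract $x_k \in \R^d$, $y_k \in \partial(f_{n_k,N}^*)_{x_k}$ with $\norm{y_k - \nabla f_N^*(x_k)} \geq \varepsilon$, and pass to limits $y_k \to y^*$, $\nabla f_N^*(x_k) \to z^*$ in $N$, still with $\norm{y^* - z^*} \geq \varepsilon$. If $(x_k)$ is bounded, the previous step closes the argument. Otherwise $\norm{x_k} \to \infty$; extract a limit direction $x_k / \norm{x_k} \to v$ and divide the subgradient inequality at $x_k$ by $\norm{x_k}$. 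Uniform boundedness of the family $(f_{n_k})$ on $N$ (a consequence of uniform convergence and continuity of the limit $f$) makes the right-hand side vanish in the limit, forcing $y^*$ into the exposed face $\argmax{y \in N}\,\langle v, y\rangle$; the same computation for $\nabla f_N^*$ puts $z^*$ into the same face. At this point the compact-witness hypothesis enters: choose $\tilde{x}_k \in M$ with $\nabla f_N^*(\tilde{x}_k) = y_k$ and $\tilde{x}'_k \in M$ with $\nabla f_N^*(\tilde{x}'_k) = \nabla f_N^*(x_k)$, then extract cluster points $\tilde{x}_\infty, \tilde{x}'_\infty \in M$. Continuity of $\nabla f_N^*$ gives $\nabla f_N^*(\tilde{x}_\infty) = y^*$ and $\nabla f_N^*(\tilde{x}'_\infty) = z^*$, and the uniform convergence on $M$ already established, together with single-valuedness of $\nabla f_N^*$, then forces $y^* = z^*$, the desired contradiction.

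The main obstacle I anticipate is this last, unbounded case. Standard Attouch-type convergence of subdifferentials is intrinsically local in $x$, and the sole role of the hypothesis $\nabla f_N^*(M) = N$ with $M$ compact is to pull the asymptotic behavior of $\partial(f_{n,N}^*)$ at infinity back to a bounded region where the local uniform convergence applies; making this pullback rigorous, in particular verifying that the witnesses $\tilde{x}_k, \tilde{x}'_k$ actually carry enough information to force $y^* = z^*$ when $N$ has non-trivial exposed faces, is the technical heart of the argument.
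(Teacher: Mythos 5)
Your Step~1 (pushing $\sup_N\abs{f_n-f}\to 0$ through the Legendre transform) coincides with the paper's Step~1, and your local compactness argument on bounded sets is sound. The gap is in the unbounded case of your Step~3, and you are right to be suspicious of it. After placing $y^*=\lim y_k$ and $z^*=\lim\nabla f_N^*(x_k)$ in the same exposed face of $N$ and choosing cluster points $\tilde{x}_\infty,\tilde{x}'_\infty\in M$ with $\nabla f_N^*(\tilde{x}_\infty)=y^*$, $\nabla f_N^*(\tilde{x}'_\infty)=z^*$, you invoke ``the uniform convergence on $M$ already established, together with single-valuedness of $\nabla f_N^*$'' to force $y^*=z^*$. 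This does not follow: the local uniform convergence controls $\partial(f_{n,N}^*)_x$ \emph{at} $x\in M$ but creates no tension between $y^*$ and $z^*$, and single-valuedness of $\nabla f_N^*$ at two \emph{distinct} base points $\tilde{x}_\infty\neq\tilde{x}'_\infty$ is perfectly consistent with $y^*\neq z^*$. Chasing monotonicity among $x_k$, $\tilde{x}_k$, $\tilde{x}'_k$ and their subgradients only recovers $\skal{v}{y^*-z^*}=0$, which you already have from the exposed-face argument.

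The paper sidesteps the bounded/unbounded dichotomy entirely by a different device. First, via Lemma~\ref{lem:LegendreSubdifferential}, everywhere-differentiability of $f_N^*$ forces $f_N$ to be \emph{strictly convex} on $N$. Then one proves a gap that is uniform over all of $\R^d$,
\begin{equation*}
\delta_0\eqdef\inf_{x\in\R^d}\,\min_{y\in N,\ \norm{y-F(x)}\geq\varepsilon_0}\Bigl(f_N(y)-\skal{y-F(x)}{x}-f_N(F(x))\Bigr)>0,
\end{equation*}
by contradiction: along a sequence $(x_n,y_n)$ driving this quantity to zero one inserts the \emph{midpoint} $\hat{y}_n=(y_n+F(x_n))/2$ and uses $\nabla f_N^*(M)=N$ to pick a \emph{bounded} subgradient $\hat{x}_n\in M\cap\partial(f_N)_{\hat{y}_n}$; compactness of $M$ and $N$ then yields a limit configuration that contradicts strict convexity. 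It is precisely this midpoint witness, sitting strictly between the two competing limit points, that resolves the degeneracy on a flat exposed face; the preimages you chose sit at the endpoints $y_k$ and $\nabla f_N^*(x_k)$ and carry no such leverage. Once $\delta_0>0$ is available, the paper's Step~2 (a quantitative form of your subgradient estimate) gives $\delta_0\leq 2\varepsilon$ for every $\varepsilon>0$, a contradiction. Supplying the strict-convexity observation and the midpoint argument is what your sketch is missing.
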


\begin{proof}
The proof is splitted into 4 steps.
\paragraph{Step 1} First, we show that 
\begin{equation*}
\sup_{y\in N}\abs{f_n(y)-f(y)}\to 0 \implies \sup_{x\in \R^d}\abs{f_{n,N}^*(x)-f_N^*(x)}\to 0.
\end{equation*}
This is due to the triangle inequality from below:
\begin{align*}
  f_{n,N}^*(x)-f_N^*(x)&=\sup_{y\in N}\bigl(\skal{x}{y}-f_{n,N}(y)\bigr)-f_N^*(x)\\
                       &=\sup_{y\in N}\bigl(\skal{x}{y}-f_N(y)+f_N(y)-f_{n,N}(y)\bigr)-f_N^*(x)\\
                       &\leq \sup_{y\in N}\abs{f_{n,N}(y)-f_N(y)}.\\
  f_N^*(x)-f_{n,N}^*(x)&=\sup_{y\in N}\bigl(\skal{x}{y}-f_{N}(y)\bigr)-f_{n,N}^*(x)\\
                       &=\sup_{y\in N}\bigl(\skal{x}{y}-f_{n,N}(y)+f_{n,N}(y)-f_N(y)\bigr)-f_{n,N}^*(y)\\
                       &\leq  \sup_{y\in N}\abs{f_{n,N}(y)-f_N(y)}.
\end{align*}

\paragraph{Step 2} 
Denote $F(x)=\nabla_x f_N^*(x)$. 
Now, we drop the dependence on $n$ by showing
\begin{align*}
  \sup_{x\in\R^d}&\abs{f_{n,N}^*(x)-f_N^*(x)}\leq \varepsilon\\
  &\Rightarrow \partial (f_{n,N}^*)_x\subset \{y\in N| f_N(y)- \skal{y-F(x)}{x}-f_N(F(x))\leq 2\varepsilon\}.
\end{align*}
For $\varepsilon >0$, we assume that $f_{n,N}^*(x)\in \bigl[ f_N^*(x)-\varepsilon, f_N^*(x)+\varepsilon\bigr]$.  Thus, since $f_{n,N}^*$ is convex, its subdifferential is not allowed to cross the epigraph of $f_N^*+\varepsilon$, because otherwise we could find an $x$ violating this assumption, since the subgradient induces a supporting hyperplane for the epigraph of a function.  That is to say
\begin{align*}
  &\forall x,x_0\in\R^d, y\in \partial (f_{n,N}^*)_{x}: \skal{x_0-x}{y}+ f_{n,N}^*(x)\leq f_N^*(x_0)+\varepsilon.\\
  \implies   &\forall x,x_0\in\R^d, y\in \partial (f_{n,N}^*)_{x}: \skal{x_0-x}{y}+ \underbrace{f_{N}^*(x)}_{=\skal{F(x)}{x}-f_N(F(x))}-\varepsilon\leq f_N^*(x_0)+\varepsilon\\
  \implies   &\forall x\in\R^d, y\in \partial (f_{n,N}^*)_{x}: f_N(y)- \skal{y-F(x)}{x}-f_N(F(x))\leq 2\varepsilon.
\end{align*}

\paragraph{Step 3} Observe that $f_N$ is strictly convex on $N$ and show for fixed $\varepsilon_0>0$ that
\begin{equation}
\label{def:delta_0}
\delta_0\eqdef \inf_{x\in \R^d}\min_{y\in N\atop \norm{y-F(x)}\geq\varepsilon_0} f_N(y)-\skal{y-F(x)}{x}-f_N(F(x))>0. 
\end{equation}
If $f_N$ was not strictly convex, there would be $y_0\neq y_1\in N,\lambda \in (0,1)$, such that $f_N\bigl(\underbrace{\lambda y_1+(1-\lambda)y_0}_{\eqdef y_\lambda}\bigr)=\lambda f_N(y_1)+(1-\lambda)f_N(y_0)$.  So for $x_\lambda\in \partial f_N(y_\lambda)$ we also have $x_\lambda\in \partial (f_N)_{y_0}\cap\partial(f_N)_{y_1}$.  By Lemma \ref{lem:LegendreSubdifferential} it would follow that  $y_0,y_1\in\partial f_N^*(x_\lambda)$,i.e. non-differentiablity of $f_N^*$ in $x_\lambda$. Therefore $f_N$ is strictly convex in $N$.

Now, for a contradiction let $(x_n,y_n)$ be a sequence such that $\delta_0$ converges to zero.  Then we can improve the affine approximation in equation \eqref{def:delta_0} around the midpoint $\hat{y}_n\eqdef\frac{y_n+F(x_n)}{2}$ and with $\hat{x}_n\in M\cap\partial (f_N)_{\hat{y}_n}$:
\begin{align*}
  &f_N(y_n)-\skal{y_n-F(x_n)}{x_n}-f_N(F(x_n))\\
  &>f_N(y_n)-\skal*{y_n-\hat{y}_n}{\hat{x}_n}-\skal*{\hat{y}_n-F(x_n)}{x}-f_N(F(x_n))\\
  &>f_N(y_n)-\skal*{y_n-\hat{y}_n}{\hat{x}_n}-f_N(\hat{y}_n)>0.
\end{align*}
Since $M,N$ are compact, we can extract a converging subsequence such that $y_n\to y_0$, $\hat{y}_n\to \hat{y}_0$, $\hat{x}_n\to \hat{x}_0$ with $\norm{\hat{y}_0-y_0}\geq \varepsilon_0/2$. Furthermore we have $\hat{x}_0\in\partial f_N(\hat{y}_0)$, because of Lemma \ref{lem:LegendreSubdifferential}:
\begin{equation*}
0=f_N^*(\hat{x}_n)+f_N(\hat{y}_n)-\skal{\hat{x}_n}{\hat{y}_n}\to f_N^*(\hat{x}_0)+f_N(\hat{y}_0)-\skal{\hat{x}_0}{\hat{y}_0}.
\end{equation*}
Altogether, this contradicts the strict convexity of $f_N$, because by construction
\begin{equation*}
  f_N(y_0)=\skal{y_0-\hat{y}_0}{\hat{x}_0}+f_N(\hat{y}_0).
\end{equation*}

\paragraph{Step 4} Conclusion.

 We conclude with $\delta_0>0$ and \textbf{Step 2}. Let to this end be $x_n\in\R^d$, $y_n\in \partial (f_N^*)_{x_n}$ with $\norm{y_n-F(x_n)}>\varepsilon_0$:
 \begin{equation*}
  \delta_0\underbrace{\leq}_{\eqref{def:delta_0}} f_N(y_n)-\skal{y_n-F(x_n)}{x_n}-f_N(F(x_n))\underbrace{\leq}_{\text{Step 2}}2\varepsilon.
 \end{equation*}
 Now, letting $\varepsilon\to 0$ produces a contradiction.

\end{proof}
Now we are ready to prove~Theorem~\ref{mGK}.
\begin{proof}
We will use the preceding results in the following order. 
First, we will assume that $\text{supp}(\mu)$ 
is compact and show with Arzel\'a-Ascoli (Lemma~\ref{lemma:arzela_ascoli}),
that the empirical potentials $\psi_n$ converge uniformly on $N$ to the true $\psi$. 
Lemma \ref{lem:unifSubdifferential} then gives uniform convergence of the empirical BDFs $F_n$ to $F$. 

Define here the Kantorovich-dual condition of the optimal transportation problem for a given potential $\psi\in\L^1(\nu)$ as
\begin{equation*}
E(\psi)\eqdef \E_\mu[\psi^*]+\E_\nu[\psi], 
\quad 
E_n(\psi)\eqdef \E_{\mu_n}[\psi^*]+\E_{\nu_n}[\psi].
\end{equation*}
Be now $\psi\in \argmin{\psi\in\L^1(\nu)}E(\psi)$, 
such that $F=\nabla \psi_N^*$. 
We show that 
\begin{equation*}
C\eqdef(\psi_n)_{n\in\N}\in\rkl{\argmin{\psi\in\L^1(\nu_n)} E_n(\psi)}_{n\in\N},
\end{equation*}
such that $F_n=\nabla \psi_{n,N}^*$ are equicontinuous and pointwise totally bounded. 
Since all $\psi\in C$ are by construction 
Lipschitz continuous with constant upper bounded by 
$\max_{x\in M}\norm{x}$, 
we get that for all $y\in N$
\begin{equation*}
y(C)\subset 
\left(-\text{diam}(N)\cdot\max_{x\in M}\norm{x},\text{diam}(N)\cdot\max_{x\in M}\norm{x}\right)
\subset \R,
\end{equation*}
i.e. pointwise total boundedness. 
Furthermore for $\varepsilon>0$, one has 
\begin{equation*}
\interior \rkl{B_{\varepsilon/\max_{x\in M}\norm{x}}^y}\subset \interior \rkl{\bigcap_{n\in\N}\psi_n^{-1}\bigl(B_\varepsilon^{\psi_n(y)}\bigr)},
\end{equation*}
i.e. equicontinuity. 
Thus the closure of $C$ is compact and there is 
at least one uniform limit point $\psi_0$ of $\psi_n$. 
We can compare this to the potential $\psi$ associated to $F=\nabla \psi_N^*$:
\begin{equation*}
E_n(\psi_n)\leq E_n(\psi)\to E(\psi)\leq E(\psi_n),
\end{equation*}
because $\mu_n\xrightarrow{w}\mu$ and $\nu_n\xrightarrow{w}\nu$. 
Now we see with Varadarajan theorem (Theorem~\ref{asConvEmp}) that 
\begin{align*}
\left|E_n(\psi_n)-E(\psi_n)\right| &\leq \text{diam}(M\times N)\\
&\times\max_{(x,y)\in M\times N}(\norm{x}+\norm{y})C_w(d_w(\mu_n,\mu)
+d_w(\nu_n,\nu))\xrightarrow{a.s.}0.
\end{align*}
Thus we have $\P$-a.s. that $\psi_{0}=\psi$ and we obtain with Lemma \ref{lem:unifSubdifferential} that
\begin{equation*}
\sup_{x\in \R^d}\norm{F_n(x)-F(x)}\xrightarrow{a.s.} 0.
\end{equation*}

\end{proof}
\subsection{Two-sample test}
\begin{lemma}[Permutations of images in the ball]
	\label{lemma:permutations}
Let \((X_1,..., X_n) \overset{\text{iid}}{\backsim} \mu\), and let
\((u_1,..., u_n)  \) be predefined uniform partition of \(\nu \) into 
\( n\) regions. Then all \(n!\) permutations of
images \(T(X_i) \) are equally possible,
\[
\bigl(T(X_1),..., T(X_n)\bigr) \overset{d}{=} \bigl(T(X_{\sigma(1)}),..., T(X_{\sigma(n)})\bigr).
\]  
\end{lemma}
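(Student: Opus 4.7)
The plan is to exploit two facts: the exchangeability of the iid sample $(X_1,\ldots,X_n)$, and the symmetry (with respect to input relabeling) of the discrete optimal transport map $T$ that pushes the empirical measure $\mu_n$ to the predefined grid measure $\nu_n = \tfrac{1}{n}\sum_j \delta_{u_j}$. Once these two observations are combined, the claim follows.

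First I would make precise the dependence of $T$ on the data. Given the sample $(X_1,\ldots,X_n)$, denote by $T^{(X)}$ the unique cyclically monotone bijection from $\{X_1,\ldots,X_n\}$ to $\{u_1,\ldots,u_n\}$ guaranteed by Theorem~\ref{prop:dpf} (uniqueness holds almost surely, since $\mu \in \mathcal{P}^{ac}_2(\R^d)$ rules out ties in the discrete assignment problem with probability one). The key structural observation is that $T^{(X)}$ depends on the sample only through the \emph{unordered} set $\{X_1,\ldots,X_n\}$: the cyclical monotonicity condition~\eqref{eq:cyclMonoDef} is invariant under relabeling of the points, so for any permutation $\sigma \in \Pi_n$, the sample $(X_{\sigma(1)},\ldots,X_{\sigma(n)})$ produces the same set-to-set bijection, and therefore $T^{(X_{\sigma(\cdot)})}(X_{\sigma(i)}) = T^{(X)}(X_{\sigma(i)})$.

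Second, since $X_1,\ldots,X_n$ are iid $\sim\mu$, the joint law of the sample is invariant under permutations:
\begin{equation*}
(X_1,\ldots,X_n) \stackrel{d}{=} (X_{\sigma(1)},\ldots,X_{\sigma(n)}) \qquad \forall\sigma \in \Pi_n.
\end{equation*}
Applying the measurable symmetric mapping $(x_1,\ldots,x_n)\mapsto (T^{(x_1,\ldots,x_n)}(x_1),\ldots,T^{(x_1,\ldots,x_n)}(x_n))$ to both sides preserves equality in distribution, yielding
\begin{equation*}
\bigl(T(X_1),\ldots,T(X_n)\bigr) \stackrel{d}{=} \bigl(T^{(X_{\sigma(\cdot)})}(X_{\sigma(1)}),\ldots,T^{(X_{\sigma(\cdot)})}(X_{\sigma(n)})\bigr) = \bigl(T(X_{\sigma(1)}),\ldots,T(X_{\sigma(n)})\bigr),
\end{equation*}
where the second equality uses the set-dependence observation above.

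The main technical obstacle is justifying that $T^{(X)}$ is indeed well-defined as a function of the unordered set, i.e.\ that the cyclically monotone assignment between $n$ distinct source points and $n$ distinct target points is unique. This follows from Theorem~\ref{prop:dpf} combined with absolute continuity of $\mu$ (which ensures the $X_i$ are pairwise distinct a.s.\ and the optimal assignment in the underlying $n!$-candidate discrete problem is unique a.s.); on the null set where uniqueness fails, one may adopt any measurable selection without affecting the distributional identity. All other steps reduce to the elementary change-of-variables for pushforward distributions, so the heart of the proof is this symmetry-and-exchangeability argument.
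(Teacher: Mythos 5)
Your proposal is correct and follows essentially the same route as the paper's proof: combine exchangeability of the iid sample with the fact that the discrete cyclically monotone transport $T$ depends only on the unordered point set, hence is invariant under relabeling. The paper compresses this into the single phrase ``since $T$ is deterministic, it does not depend on labelling,'' whereas you usefully make the label-invariance explicit via the notation $T^{(X)}$ and flag the a.s.-uniqueness of the discrete assignment, but these are elaborations of the same argument rather than a different approach.
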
 

\begin{proof}
	\label{proof:permutations}
As soon as the observed sample is homogeneous, for any permutation
\(\sigma(\cdot) \) of the set \(\{1,..,n\} \)
it holds
\[
( X_1,..., X_n  ) \overset{\text{d}}{=} ( X_{\sigma(1)},..., X_{\sigma(n)}  ).
\]
Since \(T\) is deterministic, it does not depend on labelling
\begin{align*}
& \P_{\mu} \bigl( (T(X_1),..., T(X_n)) = (u_1,..., u_n) \bigr) \\
& = \P_{\mu} \bigl( (T(X_{\sigma(1)}),..., T(X_{\sigma(n)})  \} = (u_1,..., u_n ) \bigr) \\
& = \P_{\mu} \bigl( (T(X_1),..., T(X_{n})) = (u_{\sigma^{-1}(1)},..., u_{\sigma^{-1}(n)} ) \bigr).
\end{align*}
\end{proof}

\begin{proof}[Proof of Thorem~~\ref{theorem:beta}]
Denote \(\Delta_{\nu} = W_2(\nu_{X}, \nu_{Y}) \).
Twice triangle inequality together with concentration results from~\citet{fournier2015rate},
see  Lemma~\ref{lemma:concentration} implies with probability \(\P \geq 2e^{-cx} \)
\begin{equation*}
\label{eq:test_bound}
D_{nm} \geq \Delta_{\nu}  - W_2(\nu_{X}, \nu^{nm}_X) - W_2(\nu_{Y}, \nu^{nm}_Y). 
\end{equation*}
Introduce \(\nu^n_X \eqdef F\# \mu^n_{X}  \), \(\nu^m_Y \eqdef F\# \mu^m_{Y}  \). 
The above inequality can be continued as follows:
\[
D_{nm} \geq \Delta_{\nu} - W_2(\nu_{X}, \nu^{n}_X) - W_2(\nu_{Y}, \nu^{m}_Y) - W_2(\nu^n_{X}, \nu^{nm}_X) - W_2(\nu^m_{Y}, \nu^{nm}_Y).
\]
Theorem~\ref{mGK} ensures convergence \(W_2(\nu^n_{X}, \nu^{nm}_X) \leq \sup_{u}\|F(u) - F_{nm}(u) \| \rightarrow 0\).
The rate of convergence \(r(n,m) \) is unknown.
The same bound appears for \(W_2(\nu^n_{X}, \nu^{nm}_X)\).
Applying the above fact result together with concentration result from~\citet{fournier2015rate} (see Lemma~\ref{lemma:concentration}),
we obtain
\begin{equation}
\label{eq:bound_D}
D_{nm} \geq \Delta_{\nu}  - \left( \frac{x}{n}\right)^{2/d} - \left( \frac{x}{m}\right)^{2/d} - 2r(n, m).
\end{equation}
The next step is the construction of lower bound on \(\Delta_{\nu} \).
As soon as the \( \text{supp}(\mu) \) is bounded, it can be done using 
Caffarelli’s regularity theory~\citet{Villani2003} Theorem 12.50, which ensures the fact, that \(\|F^{-1} \|_{L^2(\nu_X)}\) is finite:
\[
W_2(\mu_{X}, \mu_{Y}) \leq \Bigl\|F^{-1}(u) - F^{-1}\bigl(T_{\nu}(u) \bigr)  \Bigr\|_{L^2(\nu_{X})} \leq \|F^{-1} \|_{L^2(\nu_X)}W_2(\nu_{X}, \nu_{Y}),
\]
with \(T_{\nu}\#\nu_{X} = \nu_{Y}\).
Thus, one obtains
\[
 W_2(\nu_{X}, \nu_{Y}) \geq \left\| F^{-1}\right\|^{-1}_{L^2(\nu_{X})} \Delta.
\]
Then~\eqref{eq:bound_D} can be continued with h.p. as
\[
D_{nm} \geq \Gamma_{nm},
\quad \Gamma_{nm} \eqdef \Delta \left\| F^{-1}\right\|^{-1}_{L^2(\nu_{X})}  - \left( \frac{x}{n}\right)^{2/d} - \left( \frac{x}{m}\right)^{2/d} - 2r(n,m).
\]
The II type error is thus upper bounded with \(\P_{X} \geq 1 - e^{-cx} \), \(\P_{Y} \geq 1 - e^{-cx} \) as
\[
\P \left( D_{nm} \leq z^{nm}_{\alpha} \big | \Delta > 0 \right) \leq \P\left(z^{nm}_{\alpha} \geq \Gamma_{nm}  \right),
\]
where \(\P\left(z^{nm}_{\alpha} \geq G_{nm}  \right) \) can be easily estimated using quantile generation procedure.
The case of unbounded support of \(\mu \) is complicated and considered as am object for further study.
\end{proof}

\section{Auxiliary results}
\begin{lemma}[Subdifferential characteristic]
\label{lem:LegendreSubdifferential}
The subdifferential \( \partial f(x) \eqdef \{y \in \R^d: \forall z \in \R^d: f(z) \geq f(x) + \langle y, z - x \rangle \} \)
makes Fenchel's inequality sharp:
\[
\langle x, y \rangle = f(x) + f^*(y)
~
\Longleftrightarrow
~
y \in \partial f(x)
~
\Longleftrightarrow
~
x \in \partial f^*(y).
\]
\end{lemma}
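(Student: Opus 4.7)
The plan is to prove the chain of equivalences by combining the definition of the Legendre transform $f^*(y) \eqdef \sup_{z \in \R^d}\bigl(\langle z, y\rangle - f(z)\bigr)$ with Fenchel's inequality. The latter follows immediately by choosing $z = x$ in the supremum and reads $f(x) + f^*(y) \geq \langle x, y\rangle$ for all $x,y\in\R^d$. The key observation driving the whole argument is that the subgradient condition $y \in \partial f(x)$ is precisely the statement that this inequality is tight at $(x,y)$.

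First, I would establish $y \in \partial f(x) \iff \langle x, y\rangle = f(x) + f^*(y)$. In the forward direction, rearranging the defining inequality $f(z) \geq f(x) + \langle y, z - x\rangle$ yields $\langle z, y\rangle - f(z) \leq \langle x, y\rangle - f(x)$ for every $z$; taking the supremum over $z$ on the left bounds $f^*(y) \leq \langle x, y\rangle - f(x)$, and Fenchel's inequality closes the gap to equality. The converse is just this chain run backwards: tightness in Fenchel means $\langle x, y\rangle - f(x) = f^*(y) \geq \langle z, y\rangle - f(z)$ for every $z$, which is exactly the subgradient inequality.

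Second, for $x \in \partial f^*(y) \iff \langle x, y\rangle = f(x) + f^*(y)$, I would apply the first equivalence with the roles of $f$ and $f^*$ swapped. The Fenchel equality is symmetric in $(x,y) \leftrightarrow (y,x)$ and in $f \leftrightarrow f^*$, so the same algebraic manipulation gives the conclusion, provided one may identify $f^{**}$ with $f$. This biconjugacy is the one regularity hypothesis required and holds whenever $f$ is proper, convex and lower semicontinuous, which is the implicit setting here and is in particular satisfied by the localised potentials $\psi_N$ built in Appendix~\ref{sec:dom_F}.

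The argument is ultimately a chain of elementary rearrangements of the definition of the Legendre transform, so I do not anticipate any genuine obstacle. The only subtlety is ensuring $f^{**} = f$ for the second equivalence, but this is automatic in the convex-analytic framework used throughout the paper; if one wished to avoid appealing to biconjugacy, one could instead re-run the rearrangement argument directly starting from $x \in \partial f^*(y)$, using the variational definition of $f^*$ in place of the subgradient inequality for $f$.
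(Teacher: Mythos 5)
The paper states Lemma~\ref{lem:LegendreSubdifferential} as an auxiliary fact without providing a proof, so there is no paper argument to compare against; your proof is the standard one from convex analysis and it is correct. The first equivalence is an elementary rearrangement, as you say, and the second is the first applied to $f^*$ plus the identification $f^{**}=f$, which you correctly flag as the one genuine hypothesis.

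One small caution on your closing remark: re-running the rearrangement directly from $x\in\partial f^*(y)$ does \emph{not} avoid biconjugacy. Starting from $f^*(w)\geq f^*(y)+\langle x,w-y\rangle$ for all $w$ and taking the supremum over $w$ yields $f^{**}(x)+f^*(y)\leq\langle x,y\rangle$, hence $f^{**}(x)+f^*(y)=\langle x,y\rangle$ by Fenchel applied to $f^*$; but to replace $f^{**}(x)$ by $f(x)$ you still need $f(x)=f^{**}(x)$. Without it the implication $x\in\partial f^*(y)\Rightarrow y\in\partial f(x)$ can fail (e.g.\ a function whose value is raised at a single point above its lsc hull has the same conjugate, and hence the same $\partial f^*$, but a strictly smaller subdifferential $\partial f$ at that point). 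This is harmless here because the potentials in the paper are proper, lsc, convex by construction, but the sentence suggesting biconjugacy is optional should be dropped or qualified.
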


\begin{lemma}[Arzel\'a-Ascoli, compactness for the uniform convergence]
	\label{lemma:arzela_ascoli}
Let \((X, d)\) be compact metric spaces and \(C(X)\) a set of all continuous real-valued functions,
that map \(X\) to \(\R\). A subset \(F \subset C(X) \) is relatively compact in the topology induced
by a uniform norm iff \(F\) is equicontinuous and pointwise bounded:
\begin{description}
	\item{\textbf{Equicontinuity}}
	\[
	\forall f \in F, ~\forall \varepsilon > 0, ~\text{exists a common}~\delta, 
	~\text{s.t.}~|f(x) - f(y)| \leq \varepsilon,
	\]
	\[
	~\text{for all pairs}~(x, y)~\text{s.t.}~d(x, y) < \delta,
	\]
\end{description}
\begin{description}
	\item{\textbf{Pointwise bounded set}}
	\[
	\forall f \in F~\text{exists a common constant}~C ~\text{s.t.} 
	~
	\forall x \in X~\text{holds} ~|f(x)| \leq C.
	\]
\end{description}
\end{lemma}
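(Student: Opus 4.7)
The plan is to prove the two implications separately, treating the sufficiency (equicontinuity plus pointwise boundedness implies relative compactness) as the main direction and recovering the necessity by straightforward totally-bounded arguments. Throughout, I will identify ``relatively compact in the uniform topology'' with ``totally bounded'' in $(C(X),\|\cdot\|_\infty)$, since $C(X)$ is complete so closure of a totally bounded set is compact.

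For the sufficiency direction, my strategy is to exhibit a finite $\varepsilon$-net for $F$ in $C(X)$ for every $\varepsilon>0$. Given $\varepsilon>0$, first I invoke equicontinuity to obtain a common modulus $\delta>0$ such that $d(x,y)<\delta$ implies $|f(x)-f(y)|<\varepsilon/3$ for all $f\in F$. By compactness of $X$, finitely many balls $B_\delta(x_1),\ldots,B_\delta(x_N)$ cover $X$. Pointwise boundedness at each $x_i$ yields a constant $C_i$ bounding $\{f(x_i):f\in F\}$, so the evaluation map $\Phi:F\to\mathbb{R}^N$, $\Phi(f)=(f(x_1),\ldots,f(x_N))$, has image contained in a compact box and is therefore totally bounded. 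Picking a finite $(\varepsilon/3)$-net $\{v_1,\ldots,v_K\}$ for $\Phi(F)$ in the $\ell^\infty$-norm on $\mathbb{R}^N$, and selecting representatives $g_1,\ldots,g_K\in F$ with $\|\Phi(g_j)-v_j\|_\infty<\varepsilon/3$, I claim $\{g_1,\ldots,g_K\}$ is the desired $\varepsilon$-net. Indeed, for any $f\in F$ I choose $g_j$ with $\max_i|f(x_i)-g_j(x_i)|<2\varepsilon/3$, and for any $x\in X$ I pick $x_i$ with $d(x,x_i)<\delta$, so that
\begin{equation*}
|f(x)-g_j(x)|\leq |f(x)-f(x_i)|+|f(x_i)-g_j(x_i)|+|g_j(x_i)-g_j(x)|<\varepsilon,
\end{equation*}
using equicontinuity on the first and third terms.

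For the necessity direction, suppose $F$ is relatively compact, hence totally bounded. Pointwise boundedness is immediate: any finite $1$-net $\{h_1,\ldots,h_M\}$ gives $|f(x)|\leq \max_i\|h_i\|_\infty+1$ for all $f\in F$ and all $x\in X$. For equicontinuity, fix $\varepsilon>0$ and take a finite $(\varepsilon/3)$-net $\{h_1,\ldots,h_M\}\subset F$. Each $h_i$ is continuous on the compact space $X$, hence uniformly continuous, producing a $\delta_i>0$ so that $d(x,y)<\delta_i$ forces $|h_i(x)-h_i(y)|<\varepsilon/3$. Setting $\delta=\min_i\delta_i$, every $f\in F$ admits an $h_i$ with $\|f-h_i\|_\infty<\varepsilon/3$, and a triangle-inequality split identical to the one above yields $|f(x)-f(y)|<\varepsilon$ whenever $d(x,y)<\delta$; this $\delta$ is common to all $f\in F$, giving equicontinuity.

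The main obstacle is the sufficiency direction, and specifically the clean passage from ``bounded evaluation at $N$ points'' to ``totally bounded in sup norm.'' The key insight is that equicontinuity transfers the finite-dimensional totally-bounded structure on $\mathbb{R}^N$ back into a uniform statement on $X$ via the $\varepsilon/3$-trick; compactness of $X$ is used only to make the covering $B_\delta(x_1),\ldots,B_\delta(x_N)$ finite, which is precisely the hypothesis one cannot dispense with. An alternative route would use a countable dense subset of $X$ and a Cantor diagonal extraction to produce a pointwise-convergent subsequence from any sequence in $F$, and then promote pointwise to uniform convergence by equicontinuity; I would keep the totally-bounded argument above as the primary route because it directly delivers the $\varepsilon$-nets witnessing relative compactness without appealing to sequential characterisations.
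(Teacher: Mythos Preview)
Your proof is correct and follows the standard textbook route via total boundedness and the $\varepsilon/3$-trick. The paper, however, states Lemma~\ref{lemma:arzela_ascoli} as a classical auxiliary result in the appendix and provides no proof of its own, so there is nothing to compare against; your argument simply supplies what the paper takes for granted.
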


\begin{lemma}[Varadarajan, almost sure convergence of empirical measures]
\label{asConvEmp}
Let $(W,d)$ be a separable metric space.
Let \(\mu\) be a measure, supported on \(W\) and \(\mu_n\).
its empirical counterpart, then
\begin{equation*}
d_w(\mu_n,\mu)\xrightarrow{a.s.}0,
\end{equation*}
where \(d_w\) is a suitable equivalent metric, inducing weak convergence.
\end{lemma}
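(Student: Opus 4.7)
The plan is to take the metric $d_w$ to be the bounded Lipschitz metric
\[
d_{BL}(\mu,\nu) \eqdef \sup\Bigl\{\bigl|\textstyle\int f\, d\mu - \int f\, d\nu\bigr| : \|f\|_\infty + \text{Lip}(f) \leq 1 \Bigr\},
\]
which, by Dudley's classical theorem, metrizes weak convergence of Borel probability measures on a separable metric space. The goal therefore reduces to showing $d_{BL}(\mu_n,\mu) \to 0$ almost surely.

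First, I would reduce the supremum over the uncountable BL-unit ball to a countable family. Using separability of $W$, fix a dense sequence $(w_k)_{k\in\N} \subset W$ and form the countable family $\mathcal{G}$ of all finite $\Q$-linear combinations of the $1$-Lipschitz, $[0,1]$-valued functions $\phi_{k,r}(x) \eqdef (1 - r^{-1}d(x,w_k))^+$ with $k \in \N$ and $r \in \Q_{>0}$, together with the constant $1$. A standard argument shows that $\mathcal{G}$ is convergence-determining: if $\int g\, d\mu_n \to \int g\, d\mu$ for every $g \in \mathcal{G}$, then $\mu_n \Rightarrow \mu$. This uses separability (to approximate any $1$-Lipschitz $[0,1]$-valued $f$ on each closed ball $\overline{B_R(w_0)}$ uniformly by elements of $\mathcal{G}$ via Arzel\`a--Ascoli) combined with a tightness argument to control the tail mass outside large balls; here one invokes Ulam's theorem in the Polish case, or otherwise embeds $W$ into its completion where tightness of $\mu$ holds automatically.

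Second, apply the strong law of large numbers to each fixed $g \in \mathcal{G}$: since $g$ is bounded and $X_1,X_2,\ldots$ are i.i.d.\ with law $\mu$, we have $\tfrac{1}{n}\sum_{i=1}^n g(X_i) \xrightarrow{a.s.} \E_\mu g$. Because $\mathcal{G}$ is countable, the intersection of these full-probability events is still of full probability, and on this single event $\Omega_0$ we have $\int g\, d\mu_n \to \int g\, d\mu$ for every $g \in \mathcal{G}$ simultaneously. Invoking the convergence-determining property of $\mathcal{G}$ on $\Omega_0$ yields $\mu_n \Rightarrow \mu$, and since $d_{BL}$ metrizes this convergence we obtain $d_{BL}(\mu_n,\mu) \to 0$ on $\Omega_0$, hence almost surely.

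The main obstacle is the first step: exhibiting a genuinely convergence-determining \emph{countable} family when $W$ is only separable, not compact. Once $\mathcal{G}$ is fixed and shown to be convergence-determining, the remainder is a routine countable intersection of SLLN events. The conceptual content of the theorem lies in trading the uncountable BL-unit ball for a countable dense subset, which is precisely where separability of $W$ enters; if $W$ were only required to be, say, a general metric space, the argument would break down because no such countable family exists and the relevant empirical-process suprema could fail to be measurable.
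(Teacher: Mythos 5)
The paper itself gives no proof of this lemma; it is quoted as Varadarajan's classical theorem, so your attempt has to stand on its own. Your second step is fine and is exactly how the classical argument concludes: for a \emph{fixed} countable convergence-determining family, the SLLN gives $\int g\,d\mu_n \to \int g\,d\mu$ for each member on a full-measure event, and a countable intersection finishes. The genuine gap is in your first step, the claim that your particular family $\mathcal{G}$ is convergence-determining. Note first that passing to $\Q$-linear combinations adds nothing: by linearity of the integral, convergence on the generators $\phi_{k,r}$ already implies convergence on their span, so the whole content of your class is the convergence of the quantities $\int \phi_{k,r}\,d\mu_n = r^{-1}\int_0^r \mu_n\bigl(B(w_k,s)\bigr)\,ds$, i.e.\ of averaged ball masses. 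From these one does get $\liminf_n \mu_n(B)\geq \mu(B)$ for rational open balls, but this does not self-improve to finite unions of balls (the liminf inequality is not additive, and $\max$ or $\min(1,\cdot)$ of plateau functions is not in a linear span), and on a general separable metric space ball masses do not even determine a Borel probability measure (Davies constructed two distinct Borel probability measures on a separable metric space agreeing on every ball). So in the stated generality your family cannot be shown convergence-determining as defined, and the proposed justification does not repair this: Arzel\`a--Ascoli is a compactness criterion, not a density theorem, and closed balls $\overline{B_R(w_0)}$ in a merely separable space need not be compact, so there is no compact set on which to run the approximation; density of a countable class in $C(K)$ normally comes from Stone--Weierstrass, which needs an algebra (products), not a linear span of tent functions.

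The fix is standard and cheap. Either enlarge $\mathcal{G}$ to a countable \emph{lattice}: take $g_{U,j}(x)\eqdef\min\bigl(1, j\,d(x,U^c)\bigr)$ for $j\in\N$ and $U$ ranging over finite unions of balls with centers in $(w_k)$ and rational radii; then $g_{U,j}\uparrow \1_U$ as $j\to\infty$, every open set is a countable increasing union of such $U$, and the portmanteau liminf inequality follows directly, after which your SLLN step applies verbatim. Or follow the classical Varadarajan/Dudley route: replace $d$ by an equivalent totally bounded metric (separability allows an embedding into $[0,1]^{\N}$), pass to the compact completion $K$, use Stone--Weierstrass to obtain a countable sup-norm dense subset of $C(K)$, observe that weak convergence on $W$ can be tested on bounded uniformly continuous functions so nothing is lost in the embedding, and then conclude by the SLLN; in that version your tightness/Ulam detour is not needed at all.
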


\begin{lemma}[Concentration result, Theorem 2~\citet{fournier2015rate}]
	\label{lemma:concentration}
	Let \(\nu = \mathcal{U}(B_d(1)) \) and let \(\nu^n\) be its empirical counterpart.
	Then the following bound holds for any \( x>0\) and relatively large \(n\), 
	s.t. \(\frac{x}{n} \leq 1\):
	\[
	\P\left( W_2(\nu, \nu^n) \geq \left(\frac{x}{n} \right)^{2/d} \right) \leq e^{-cx}.
	\]
\end{lemma}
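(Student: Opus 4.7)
The plan is to adopt the strategy of Fournier and Guillin, which combines a dyadic partitioning of the support with concentration of the empirical mass on each partition element.

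First, I would dyadically decompose the unit ball \(B_d(1)\) into cubes at scales \(k = 0, 1, \ldots, K\), with \(K\) of order \(\log_2 n\). At scale \(k\) there are at most \(C\, 2^{dk}\) cubes \(Q\), each of diameter at most \(c\,2^{-k}\). A transport plan between \(\nu\) and \(\nu^n\) is then built hierarchically: on each terminal cube \(Q\) at scale \(K\) one matches the common mass \(\min(\nu(Q), \nu^n(Q))\) internally at squared cost at most \(2^{-2K}\) per unit mass, and promotes the excess \(|\nu^n(Q) - \nu(Q)|\) to the parent cube at scale \(K-1\), where the matching is repeated; one iterates up to scale \(0\). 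This yields the telescoping bound
\[
W_2^2(\nu, \nu^n) \;\leq\; \sum_{k=0}^{K} 2^{-2k} \sum_{Q \text{ at scale } k} \bigl|\nu^n(Q) - \nu(Q)\bigr|.
\]

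Second, for each cube \(Q\) at scale \(k\) the count \(n\,\nu^n(Q)\) is \(\text{Binomial}(n, \nu(Q))\) with mean of order \(2^{-dk}\). Bernstein's inequality then yields the sub-exponential tail
\[
\P\Bigl( \bigl|\nu^n(Q) - \nu(Q)\bigr| \geq t \Bigr) \;\leq\; 2\exp\!\left( -c \min\Bigl( \tfrac{n t^2}{2^{-dk}},\, n t \Bigr) \right).
\]
Choosing per-scale thresholds \(t_k\) of order \(2^{-dk/2}\sqrt{x/n} + x/n\) and taking a union bound over the \(O(2^{dk})\) cubes at each scale and then over \(k \leq K\), one controls \(\sum_Q |\nu^n(Q) - \nu(Q)|\) simultaneously at every scale with failure probability at most a constant multiple of \(e^{-cx}\).

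Plugging these per-scale bounds into the telescoping estimate and optimising \(K\) recovers both the expectation bound \(\E\,W_2^2(\nu,\nu^n) \lesssim n^{-2/d}\) (the regime \(d > 4\) drives the optimal scale \(K \sim \tfrac{1}{d}\log_2 n\)) and, in the regime \(x/n \leq 1\), the single-exponential deviation \(\P(W_2(\nu, \nu^n) \geq (x/n)^{2/d}) \leq e^{-cx}\) claimed in the lemma.

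The main obstacle will be balancing the per-scale thresholds so that the hierarchical transport cost and the union-bound penalty contribute at the same order, and so that no polynomial-in-\(n\) factor is accumulated across the \(O(\log n)\) scales. A conceptually lighter alternative I would consider is to first bound the expectation \(\E\,W_2(\nu, \nu^n)\) via the same dyadic argument and then apply a transport-entropy inequality of Talagrand type (or McDiarmid's bounded-differences inequality with step \(O(1/n)\)) to concentrate \(W_2(\nu, \nu^n)\) around its mean; this separates the combinatorics of the first-moment estimate from the deviation argument, at the price of possibly worse numerical constants in \(c\).
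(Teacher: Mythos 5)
The paper does not supply a proof of this lemma; it simply cites it as Theorem~2 of \citet{fournier2015rate}. Your sketch is essentially a reconstruction of their own argument -- the dyadic partition, the hierarchical transport estimate $W_2^2(\nu,\nu^n)\lesssim\sum_k 2^{-2k}\sum_{Q\text{ at scale }k}|\nu^n(Q)-\nu(Q)|$, and Bernstein together with a union bound are exactly the ingredients they use -- so the approach is the right one.

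The issue you already flag as the "main obstacle" is, however, a genuine gap and not merely constant-chasing. To make the hierarchical cost match $(x/n)^{2/d}$ one has to run the decomposition down to scale $2^K\sim(n/x)^{1/d}$, which gives $\Theta(n/x)$ terminal cubes; multiplying your per-cube Bernstein tail $e^{-c'x}$ by $n/x$ only recovers $e^{-cx}$ once $x\gtrsim\log n$. Fournier and Guillin avoid this by controlling the tail of the whole weighted quantity $\sum_k 2^{-2k}\sum_Q|\nu^n(Q)-\nu(Q)|$ at once rather than cube-by-cube, letting the geometric decay absorb the cardinality of the partition; your $t_k$ choice as written does not close the argument for moderate $x$. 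Two further remarks on the target statement itself: Theorem~2 of \citet{fournier2015rate} is phrased for $W_p^p$, and in the regime $p<d/2$ it reads $\P(W_2^2(\nu,\nu^n)\geq\varepsilon)\leq Ce^{-cn\varepsilon^{d/2}}$, i.e.\ $\P\bigl(W_2^2\geq(x/n)^{2/d}\bigr)\leq Ce^{-cx}$ -- the squared distance, not $W_2$ as the lemma (and your concluding line) write, something your own observation that $\E W_2^2\lesssim n^{-2/d}$ already signals. And the form $(x/n)^{2/d}$ with tail $e^{-cx}$ is specific to $p<d/2$, i.e.\ $d\geq5$; for $d\leq4$ the exponent is different and the inequality as written fails, a restriction that neither the paper nor your sketch records.
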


\end{document}